\newtheorem{theorem}{Theorem}[section]
\newtheorem{lemma}[theorem]{Lemma}
\newtheorem{proposition}[theorem]{Proposition}
\newtheorem{corollary}[theorem]{Corollary}
\newtheorem{remark}[theorem]{Remark}
\numberwithin{equation}{section}
\author{Yunlong Yang}
\address{
School of Science\\
Dalian Maritime University\\
Dalian, 116026,  P. R. China}
\email{lnuyylong425@163.com}
\author{Yanwen Zhao}
\address{
School of Science \\
Dalian Maritime University\\
Dalian, 116026,  P. R. China }
\email{1789093672@qq.com}
\author{Jianbo Fang}
\address{School of Mathematics and Statistics\\ Guizhou University of Finance and Economics\\
Guiyang, 550025,  P. R. China }
\email{fjbwcj@126.com}
\author{Yanlong Zhang}
\address{Institute of Mathematics\\ 
Henan Academy of Sciences\\
Zhengzhou, 450046,  P. R. China}
\email{ylzhang@hnas.ac.cn}
\keywords{Area-preserving flow, Inverse curvature flow, $\ell$-convex curves, Length-preserving flow}
\subjclass[2020]{53E99, 53A04}
\thanks{This work is supported by the Fundamental Research Funds for the Central Universities (Nos.3132023202, 3132022206).}
\begin{document}

\title[Inverse flow of planar curves with singularities]
{On length-preserving and area-preserving inverse curvature flow of planar curves with singularities}

\begin{abstract}
This paper aims to investigate the
evolution problem for planar curves with singularities. 
Motivated by the inverse curvature flow introduced by
Li and Wang (Calc. Var. Partial Differ. Equ. 62 (2023), No. 135), we intend to consider 
the area-preserving and length-preserving inverse curvature flow with nonlocal term for $\ell$-convex Legendre curves. For the area-preserving flow, an $\ell$-convex Legendre curve 
with initial algebraic area $A_0>0$ evolves
to a circle of radius $\sqrt{\frac{A_0}{\pi}}$.
For the length-preserving flow, an $\ell$-convex Legendre curve 
with initial algebraic length $L_0$ evolves to a circle of radius $\frac{L_0}{2\pi}$.
As the by-product, we obtain some geometric
inequalities for $\ell$-convex Legendre curves
through the length-preserving flow.
\end{abstract}

\maketitle

\section{Introduction}\label{sec1}

The evolution problem for curves is a fundamental problem in geometry and topology, which has gained much attention in pure mathematics and has been widely applied in fields such as computer vision, image processing, and material science. 
One of the most well-known models may be the curve shortening flow of planar curves which is equivalent to a nonlinear parabolic equation for curvature. Normally, 
the Frenet frame cannot be built at singular points, so it is quite difficult to 
define the evolution problem for curves with singularities even in the plane.

The aim of this paper is to study evolution problems for some planar curves with singularities. Before introducing the models of this paper, we first illustrate basic definitions and notation about curves with singularities in order to comprehend associated evolution problems.

\subsection{Legendre curves}\label{sec1.1}

The curve $\gamma: I\rightarrow{\mathbb{R}^2} $ is referred to an {\it Legendre curve}, if there exists a unit vector
field $\nu: I\rightarrow \mathbb{S}^1$ satisfying
\begin{equation}\label{l-con}
    \langle\gamma'(\theta),\nu(\theta)\rangle=0
\end{equation}
for any $\theta\in I$.
Here, $\gamma'(\theta)=\frac{d\gamma}{d\theta}$ and $\langle\cdot,\cdot\rangle$ denotes the inner product.
Notably, when the vector field $\nu$ is smooth, this special class of curves is termed {\it frontal} as introduced in \cite{F-T2013}.
This definition allows 
us to establish the Frenet frame even at singular points.

Let $\gamma$ be a frontal.  
In this context, the pair $({\nu(\theta),\mu(\theta)})$, satisfying $\mu(\theta) = J(\nu(\theta))$, constitutes a moving frame along the curve $\gamma(\theta)$ in $\mathbb{R}^2$,  where $J$ represents a counterclockwise rotation of $\frac{\pi}{2}$ on $\mathbb{R}^2$. The Frenet formula for $\gamma$ is defined as:
$$\left(
\begin{array}{c}
     \nu'(\theta)  \\
      \mu'(\theta)
\end{array}
\right)=\left(
\begin{array}{cc}
    0 & \ell(\theta) \\
    -\ell(\theta) & 0 
\end{array}
\right)\left(
\begin{array}{c}
     \nu(\theta)  \\
      \mu(\theta)
\end{array}
\right),$$
Here, $\ell(\theta) = \langle \nu'(\theta),\mu(\theta)\rangle$, and there exists a smooth function $\beta$ such that
$$\gamma'(\theta)=\beta(\theta)\mu(\theta).$$
Consequently, a Legendre curve $\gamma$ is regular if and only if $\beta$ never vanishes. The pair $(\ell, \beta)$ serves as a crucial invariant for Legendre curves, commonly referred to as the {\it curvature pair} of Legendre curves, as discussed in \cite{F-T2016}. A straightforward computation reveals the relationship between the conventional curvature $\kappa$ and $\beta$ for Legendre curves, specifically, $\kappa = \frac{\ell}{|\beta|}$.
Also, it is easy to see that the singular points on a Legendre curve satisfy $\beta = 0$. Furthermore, if both $\ell$ and $\beta$ maintain consistent sign, the Legendre curve is convex, and vice versa, as detailed in \cite{F-T2016}. In essence, Legendre curves can be seen as an extension of the 
convex curves.
When $\ell > 0$, an Legendre curve is denoted as an {\it $\ell$-convex Legendre curve}. Remarkably, as established in \cite[Lemma 3.2]{L-W2023}, a Legendre curve can be transformed into one with $\ell = 1$ through reparametrization. Therefore, for the sake of simplicity, we may exclusively focus on Legendre curves with $\ell = 1$.

Let $\gamma:\mathbb{S}^1\rightarrow\mathbb{R}^2$ be an $\ell$-convex Legendre curve with $\ell=1$. According to \cite[Lemma 3.3]{L-W2023}, the curve $\gamma$ can be expressed as
\begin{equation}\label{qx}
\gamma(\theta) = p(\theta)(\cos \theta, \sin \theta) + p'(\theta)(-\sin \theta, \cos \theta),
\end{equation}
Here, $(\cos\theta,\sin\theta)$ is denoted as the unit vector field $\nu$ on $\mathbb{S}^1$. The function $p(\theta)$ serves the same role as the support function for a convex curve and is still called the {\it support function} for the $\ell$-convex Legendre curve $\gamma$ in
\cite{L-W2023}.
Notably, $p(\theta)>0$ and $p(\theta)+p''(\theta)>0$ for any $\theta\in\mathbb{S}^1$, curve $\gamma$ turns into a convex curve. Moreover, in line with \cite[Remark 3.5]{L-W2023}, we have
\begin{equation}\label{ql}
\beta(\theta) = p(\theta) + p''(\theta).
\end{equation}
Much like convex curves, the algebraic length $L$ and the algebraic area $A$ of $\gamma$ can be defined, as detailed in \cite[Definition 3.4]{L-W2023}, by the following expressions
\begin{align}
    &L=\int_{\mathbb{S}^1}(p(\theta)+p''(\theta))d\theta=\int_{\mathbb{S}^1}p(\theta)d\theta,\label{L}\\
    &A=\frac{1}{2}\int_{\mathbb{S}^1}p(\theta)(p(\theta)+p''(\theta))d\theta.\label{A}
\end{align}
It's worth noting that replacing $p$ with $-p$ only alters the sign of $L$ while keeping the sign of $A$. Therefore, we may pay attention to $\ell$-convex curves with 
$L\ge 0$.
Additionally, it's important to highlight that the algebraic area $A$ of a Legendre curve may be positive, zero, or even negative values.

If we consider the $\ell$-convex curve $\gamma$ with a support function expressed as
\begin{equation}\label{exp1}
    p(\theta)=a_0+\sum_{k\ge 1}(a_k\cos k\theta+b_k \sin k\theta),
\end{equation}
then, through 
integration by parts and Parseval's identity, we obtain
\begin{align}
    &L=2\pi a_0,\label{L1}\\
    &A=\pi a_0^2+\frac{\pi}{2}\sum_{k\ge 2}(1-k^2)(a_k^2+b_k^2),\label{A1}
\end{align}
see e.g. \cite{L-W2023}.
Additionally, the coefficients $a_1$ and $b_1$
appeared in \eqref{exp1} own great significance in determining the position of the curve $\gamma$.
Concretely, $$a_1=\frac{1}{\pi}\int_0^{2\pi}p(\theta)\cos\theta d\theta$$
and $$b_1=\frac{1}{\pi}\int_0^{2\pi}p(\theta)\sin\theta d\theta.$$
The point $(a_1,b_1)$  is commonly referred to be the {\it Steiner point} for convex curves, see \cite[p.50]{S2014}.

Due to the isoperimetric inequality for $\ell$-convex Legendre curves established in \cite[Theorem 4.1]{L-W2023}, if its algebraic length $L$ of such a curve is equal to zero, then its algebraic area $A$ must be a negative value, unless the curve degenerates into a single point. However, when $L$ is greater than zero, the situation for $A$ becomes notably intricate. For instance, consider the support function of $\gamma$ in the form of $p(\theta) = 2 + \sin 2\theta$.
In this case, the algebraic area $A$ equals to $\frac{5\pi}{2}$.
Alternatively, if $p(\theta) = \sqrt{\frac{3}{2}} + \sin 2\theta$, the corresponding $A$ evaluates to zero.
Lastly, for the support function $p(\theta) = \frac{1}{2} + \sin 2\theta$, the associated algebraic area $A$ becomes $-\frac{5\pi}{4}$; see Figure \ref{fig1}.

\begin{figure}
\begin{center}
\subfloat[
    $p(\theta)=2+\sin 2\theta$
\label{fig1-2}]{\includegraphics[width=0.4\textwidth]{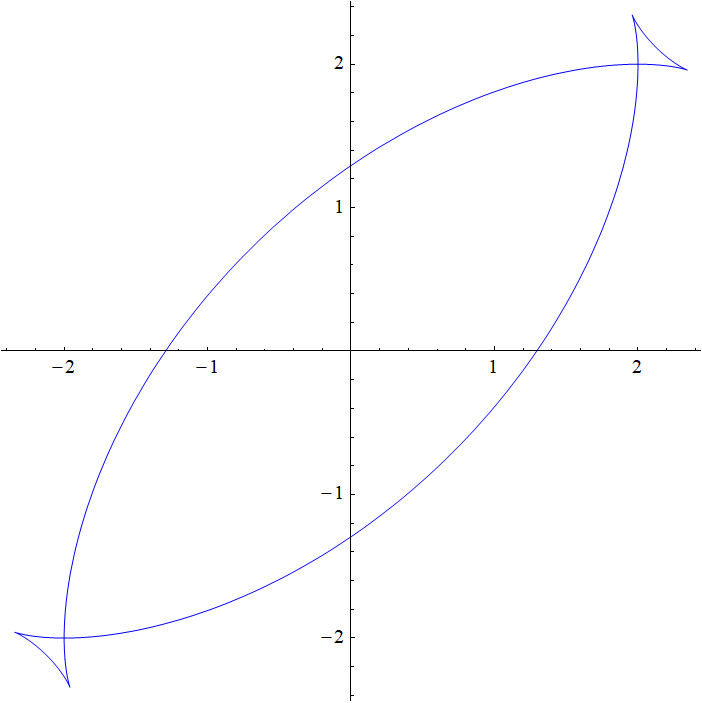}}\qquad
\subfloat[$p(\theta)=\sqrt{\frac{3}{2}}+\sin 2\theta$
\label{fig1-3}]{\includegraphics[width=0.4\textwidth]{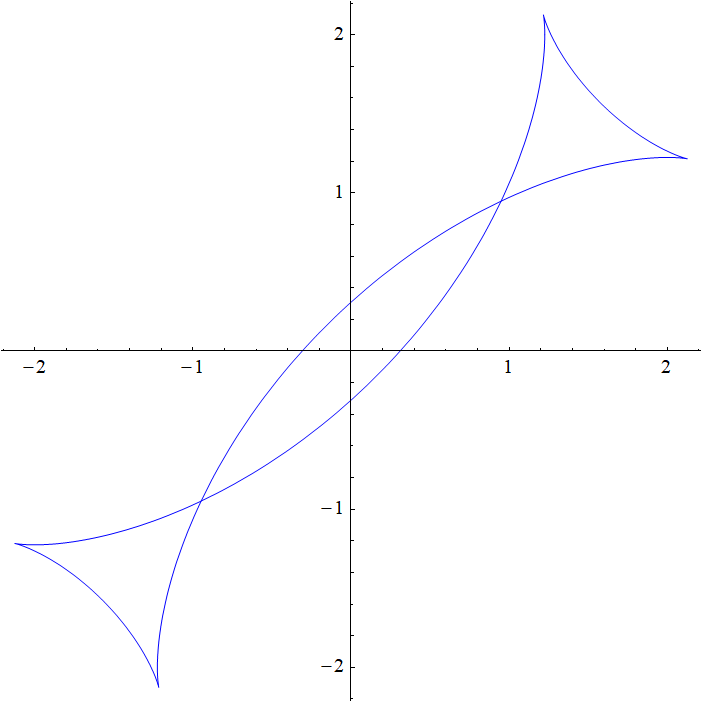}}\qquad
\subfloat[$p(\theta)=\frac{1}{2}+\sin 2\theta$
\label{fig1-4}]{\includegraphics[width=0.4\textwidth]{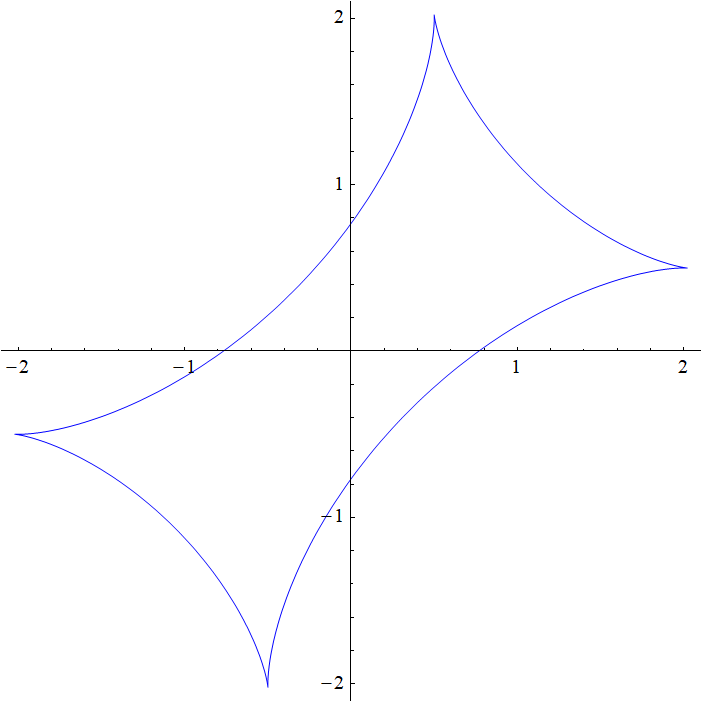}}\qquad
\subfloat[
    $p(\theta)=2\sin 2\theta$
\label{fig1-1}]{\includegraphics[width=0.4\textwidth]{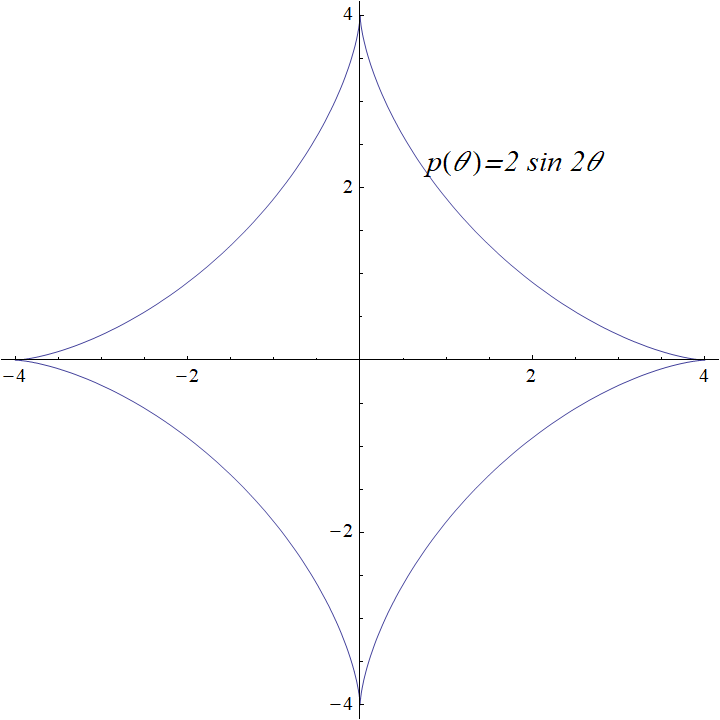}}
\caption{Examples
\label{fig1}}
\end{center}
\end{figure}


\subsection{Evolution problems and main theorems}

Inspired by the impressive inverse curvature flow researched by Li and Wang \cite{L-W2023},
we intend to consider the evolution problem
for $\ell$-convex Legendre curves as follows
\begin{equation}\label{model}
    \begin{cases}
        \frac{\partial \gamma}{\partial t}=\frac{\ell (\beta-\lambda(t))^{\prime}}{\ell^{2}+\beta^{2}} \mu+(\beta-\lambda(t)) \nu, \\
        \frac{\partial \nu}{\partial t}=-\frac{\beta (\beta-\lambda(t))^{\prime}}{\ell^{2}+\beta^{2}} \mu,
    \end{cases}
\end{equation}
where $\lambda(t)$ is a nonlocal term.


The first purpose of this paper is to study nonlocal
flows with $\lambda(t)=\frac{1}{L}\int_0^{2\pi}\beta^2 d\theta$
or $\lambda(t)=\frac{L}{2\pi}$.
From the evolution equations for $L$ and $A$ (see Lemma \ref{lem2.1}), the flow \eqref{model} is area-preserving 
when $\lambda(t)=\frac{1}{L}\int_0^{2\pi}\beta^2 d\theta$, and it is length-preserving when $\lambda(t)=\frac{L}{2\pi}$.

For the area-preserving flow, one can get 

\begin{theorem}\label{thm1.1}
Let $\gamma_0$ be an initial $\ell$-convex Legendre curve with $\ell(\theta,0)=1$. If
the algebraic area of $\gamma_0$ is positive, then for $\lambda(t)=\frac{1}{L}\int_0^{2\pi}\beta^2 d\theta$, the flow \eqref{model} exists in the time interval $[0,+\infty)$. Meanwhile, the evolving curve is still $\ell$-convex, preserves the algebraic area, and converges
to a circle of radius $\sqrt{\frac{A_0}{\pi}}$
in the $C^{\infty}$ sense as time $t$ goes to infinity.
\end{theorem}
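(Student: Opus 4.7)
The plan is to reduce the geometric flow \eqref{model} to a scalar evolution equation for the support function $p(\theta,t)$ and then exploit Fourier decomposition. Using $\ell\equiv 1$ and \eqref{ql}, the normal component of the velocity is $\beta-\lambda(t) = p + p_{\theta\theta} - \lambda(t)$, so (modulo a tangential reparametrization that does not affect the image of the curve) the support function satisfies the linear nonlocal PDE
\begin{equation*}
p_t(\theta,t) = p(\theta,t) + p_{\theta\theta}(\theta,t) - \lambda(t).
\end{equation*}
I would first derive this PDE carefully from \eqref{model} and \eqref{qx}, and then verify that the choice $\lambda(t) = \frac{1}{L}\int_0^{2\pi}\beta^2\,d\theta$ yields $\frac{dA}{dt}=0$ by differentiating \eqref{A} and integrating by parts.

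The decisive feature is that the operator $1+\partial_{\theta\theta}$ is diagonal in the Fourier basis with eigenvalue $1-k^2$ on the $k$th mode. Substituting the expansion \eqref{exp1} into the PDE, the modes decouple into
\begin{equation*}
a_0'(t) = a_0(t) - \lambda(t), \qquad a_k'(t) = (1-k^2)\,a_k(t), \qquad b_k'(t) = (1-k^2)\,b_k(t), \qquad k\ge 1.
\end{equation*}
Hence $a_1(t)\equiv a_1(0)$ and $b_1(t)\equiv b_1(0)$ (the Steiner point is preserved), while $a_k(t),b_k(t)=O(e^{-(k^2-1)t})$ for every $k\ge 2$.

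Combining area preservation with the formula \eqref{A1} gives
\begin{equation*}
\pi a_0(t)^2 = A_0 + \frac{\pi}{2}\sum_{k\ge 2}(k^2-1)\bigl(a_k(t)^2+b_k(t)^2\bigr) \;\ge\; A_0 \;>\; 0,
\end{equation*}
so $a_0(t)\ge \sqrt{A_0/\pi}$ is uniformly bounded away from zero. As $t\to\infty$ the sum on the right decays exponentially to zero, forcing $a_0(t)\to\sqrt{A_0/\pi}$. The exponential decay of the high-frequency modes simultaneously yields long-time existence, preservation of $\ell$-convexity (automatic for $\ell\equiv 1$ as long as $p$ remains smooth), and $C^\infty$ convergence of $p(\cdot,t)$ to
\begin{equation*}
p_\infty(\theta) = \sqrt{A_0/\pi} + a_1(0)\cos\theta + b_1(0)\sin\theta,
\end{equation*}
the support function of a circle of radius $\sqrt{A_0/\pi}$ centered at the Steiner point $(a_1(0),b_1(0))$ of $\gamma_0$.

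The principal obstacle is the first step: justifying the reduction of the full flow \eqref{model} (with its evolving frame $(\nu,\mu)$ and nontrivial tangential component) to the clean scalar PDE above, and confirming that the curve reconstructed from $p(\cdot,t)$ via \eqref{qx} actually solves \eqref{model}. Once this reduction is in place, the combination of linearity, spectral gap on modes with $k\ge 2$, and conservation of area makes the remainder essentially a Fourier-coefficient computation.
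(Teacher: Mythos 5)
Your proposal is correct in substance but follows a genuinely different route from the paper. The paper never diagonalizes the equation: after the same reduction to $p_t=p_{\theta\theta}+p-\lambda(t)$, it proves $\ell$-convexity by showing $\int_0^{2\pi}\beta\cos\theta\,d\theta=\int_0^{2\pi}\beta\sin\theta\,d\theta=0$ is conserved, gets long-time existence from uniform bounds on $\lambda(t)$ (using the monotonicity of $L$ and of $\int\beta^2\,d\theta$ together with the isoperimetric and Green--Osher inequalities) plus the maximum principle and Krylov's regularity theory, and then establishes convergence through Wirtinger-type differential inequalities for $\int(\beta-\frac{L}{2\pi})^2\,d\theta$ and its analogues, finishing with a Blaschke-selection argument to rule out escape to infinity. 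Your Fourier approach exploits the key structural fact that the nonlocal term is spatially constant and therefore only enters the zeroth mode, so every mode $k\ge 1$ satisfies the explicit linear ODE $a_k'=(1-k^2)a_k$; this makes the Steiner-point invariance, the spectral gap, the $C^\infty$ convergence, and the non-escape all transparent, and it replaces the paper's battery of integral estimates with the single identity $\pi a_0(t)^2=A_0+\frac{\pi}{2}\sum_{k\ge2}(k^2-1)(a_k^2+b_k^2)\ge A_0$. What the paper's method buys is robustness (it survives when the equation is not exactly diagonalizable, as in its treatment of the quantities $F$, $G$, $Q$); what yours buys is an essentially closed-form solution.

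Three points need tightening. First, the reduction you flag as the ``principal obstacle'' is exactly how the paper proceeds (equations \eqref{yl}--\eqref{jh} and Lemma \ref{lem2.1} give $p_t=f=\beta-\lambda(t)$ after absorbing a tangential field), so you can simply import that step. Second, there is a mild circularity between ``area is preserved'' and ``the flow exists and $\lambda(t)$ is defined'': you need $L=2\pi a_0>0$ for $\lambda$ to make sense, and $a_0>0$ is deduced from area preservation. This is resolved by a continuation argument on the maximal existence interval: there $a_0(t)^2\ge A_0/\pi>0$ by your identity (or by directly integrating $\frac{d}{dt}a_0^2=-\frac{1}{\pi}\int\beta^2\,d\theta+2a_0^2$, which is an explicit separable ODE once the higher modes are known), so $a_0$ never reaches zero and the solution extends globally. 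Third, to claim $C^\infty$ convergence you should note that smoothness of $p_0$ gives rapid decay of $a_k(0),b_k(0)$, so that $\sum_k k^{2m}\bigl(a_k(t)^2+b_k(t)^2\bigr)\le e^{-6t}\sum_k k^{2m}\bigl(a_k(0)^2+b_k(0)^2\bigr)$ for the modes $k\ge2$, justifying termwise differentiation and uniform convergence of all derivatives. None of these is a fundamental gap.
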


\begin{remark}
The initial algebraic area $A_0>0$, as mentioned in the area-preserving flow of Theorem \ref{thm1.1}, stands as an indispensable prerequisite for the evolving  $\ell$-convex Legendre curves. In fact, if $A_0\le 0$, there is the possibility that $L_0=0$. This makes nonlocal term $\lambda(t)=\frac{1}{L}\int_0^{2\pi}\beta^2 d\theta$ no sense.
For example, consider that the support function of $\gamma_0$ is $p_0(\theta)=\sin\theta+2\cos\theta$.
In this case, the algebraic area $A_0=0$.
And if the support function $p_0(\theta)=\sin\theta+2\cos\theta+\sin2\theta+2\cos2\theta$, its algebraic area $A_0<0$. In both situations, the associated algebraic length $L_0=0$.
\end{remark}

For the length-preserving flow, one has 

\begin{theorem}\label{thm1.2}
Let $\gamma_0$ be an initial $\ell$-convex Legendre curve with $\ell(\theta,0)=1$. For $\lambda(t)=\frac{L}{2\pi}$, the flow \eqref{model} exists in the time interval $[0,+\infty)$. Meanwhile, the evolving curve is still $\ell$-convex, preserves the algebraic length and converges
to a circle of radius $\frac{L_0}{2\pi}$.
Specially, if $L_0=0$, then it converges to a point. 
\end{theorem}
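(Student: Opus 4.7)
The plan is to reduce \eqref{model} with $\lambda(t) = L/(2\pi)$ to a \emph{linear} scalar equation for the support function, then to diagonalize it by Fourier series, and finally to read off every conclusion of the theorem from the resulting explicit formulas. The one genuine obstacle is the initial reduction itself; every subsequent argument is a routine Fourier calculation on $\mathbb{S}^1$.

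For the reduction, I would follow the framework of \cite{L-W2023} and work in the gauge $\ell \equiv 1$, in which $\nu(\theta,t) = (\cos\theta,\sin\theta)$ is a fixed family of unit vectors and the tangential component of the velocity in \eqref{model} merely reparametrizes the curve. In this gauge only the normal component $\beta - \lambda$ is geometrically meaningful, and together with \eqref{ql} this yields
\begin{equation*}
\frac{\partial p}{\partial t} \,=\, p + p'' - \lambda(t) \,=\, \beta - \lambda(t).
\end{equation*}
Differentiating \eqref{L} along the flow gives $dL/dt = L - 2\pi\lambda$, so the length-preserving choice $\lambda(t) = L(t)/(2\pi)$ forces $L(t) \equiv L_0$ for all $t$. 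The crucial consequence is that $\lambda(t) \equiv L_0/(2\pi)$ is a genuine constant, and the support function satisfies the linear equation
\begin{equation*}
p_t \,=\, p + p'' - \frac{L_0}{2\pi}, \qquad p(\theta,0) = p_0(\theta).
\end{equation*}

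Expanding $p(\theta,t) = a_0(t) + \sum_{k\geq 1}\bigl(a_k(t)\cos k\theta + b_k(t)\sin k\theta\bigr)$ and using \eqref{L1} to identify $a_0(0) = L_0/(2\pi)$, the PDE decouples into the scalar ODEs $\dot{a}_0 = a_0 - L_0/(2\pi)$ together with $\dot{a}_k = (1-k^2)a_k$ and $\dot{b}_k = (1-k^2)b_k$ for $k \geq 1$. Hence $a_0(t) \equiv L_0/(2\pi)$ (redundantly confirming length preservation), the $k=1$ modes $(a_1,b_1)$ are frozen, and for $k \geq 2$ both coefficients decay like $e^{(1-k^2)t}$. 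Global existence on $[0,\infty)$ is immediate. Since $p_0 \in C^\infty(\mathbb{S}^1)$, its Fourier coefficients decay faster than any polynomial, and the exponential factors combined with Sobolev embedding on $\mathbb{S}^1$ yield
$$\bigl\|p(\cdot,t) - p_\infty\bigr\|_{C^m(\mathbb{S}^1)} \,=\, O(e^{-3t}) \quad\text{for every } m \geq 0,\qquad p_\infty(\theta) := \tfrac{L_0}{2\pi} + a_1\cos\theta + b_1\sin\theta.$$

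It remains to interpret the limit and verify the remaining assertions. The $\ell$-convexity is preserved throughout because the analysis is performed in the gauge $\ell \equiv 1$. A direct application of \eqref{qx} to $p_\infty$ shows that the corresponding Legendre curve is exactly the circle of radius $L_0/(2\pi)$ centered at the Steiner point $(a_1,b_1)$; when $L_0 = 0$ this circle degenerates to the single point $(a_1,b_1)$, giving the special case claimed. As a consistency check one computes
\begin{equation*}
\frac{dA}{dt} \,=\, \int_0^{2\pi}\beta(\beta - \lambda)\,d\theta \,=\, \int_0^{2\pi}\beta^2\,d\theta - \frac{L_0^2}{2\pi} \,\geq\, 0
\end{equation*}
by the Cauchy-Schwarz inequality, so the algebraic area is monotone non-decreasing along the flow and approaches the expected limit $L_0^2/(4\pi)$.
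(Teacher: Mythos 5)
Your proposal is correct and reaches every conclusion of Theorem \ref{thm1.2}, but by a genuinely different route from the paper. The paper treats the length-preserving and area-preserving flows with one shared toolkit: it reduces \eqref{model} to \eqref{jh1} by the same gauge change you invoke, then proves $\ell$-convexity via the moment conditions $\int_0^{2\pi}\beta\cos\theta\,d\theta=\int_0^{2\pi}\beta\sin\theta\,d\theta=0$ (Lemma \ref{lem2.2}), global existence via the maximum principle and Krylov's regularity theory applied to the uniformly parabolic equation \eqref{eqn2.9} (Lemma \ref{lem2.3}), exponential convergence of $\beta$ via an energy estimate combined with a Wirtinger/Poincar\'e inequality (Lemma \ref{lem2.4}), and convergence of the curve via a second energy estimate for $\overline{p}-\tfrac{L}{2\pi}$ plus a non-escape argument (Propositions \ref{pro3.2} and \ref{jh2-s2}). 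You instead observe that length preservation makes $\lambda(t)\equiv L_0/(2\pi)$ a constant, so the support-function equation $p_t=p_{\theta\theta}+p-L_0/(2\pi)$ is linear and autonomous and can be solved in closed form by Fourier diagonalization: $a_0$ sits at its (unstable but exactly attained) equilibrium, the $k=1$ modes freeze at the Steiner point, and all $k\ge 2$ modes decay like $e^{(1-k^2)t}$. This yields global existence, the $e^{-3t}$ rate, $C^\infty$ convergence, non-escape, and the identification of the limit circle all at once, with no need for the maximum principle, parabolic regularity theory, or the Blaschke selection argument. What your approach buys is explicitness and brevity; what it gives up is robustness --- the paper's energy/maximum-principle machinery is deliberately built to also handle the area-preserving flow, where $\lambda(t)=\frac{1}{L}\int_0^{2\pi}\beta^2\,d\theta$ is genuinely time-dependent and no explicit diagonalization is available. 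Two small points are worth making explicit if you write this up: first, the preservation of $\ell$-convexity is indeed automatic in your formulation (the gauge fixes $\ell\equiv 1>0$, and any smooth periodic $p$ produces a Legendre curve via \eqref{qx}), which silently absorbs the content of the paper's Lemma \ref{lem2.2}; second, you should state that the Fourier series solution is the unique solution of the (forward-parabolic) initial value problem, so that it really is the flow whose asymptotics you are describing. Neither point is a gap, only a matter of spelling out what you have implicitly used.
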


In essence, the flow \eqref{model} is an inverse flow for $\ell$-convex Legendre curves. In particular, when the initial curve becomes a convex curve, this flow represents Gao-Pan-Tsai's area-preserving model \cite{G-P-T2021} when $\lambda(t)=\frac{1}{L}\int_0^{2\pi}\beta^2 d\theta$, and corresponds to Pan-Yang's length-preserving flow \cite{P-Y2008} when $\lambda(t)=\frac{L}{2\pi}$. Unlike previous work on the inverse curvature flow for convex curves, see \cite{G-P-T2020, G-P-T2021}, the long-term existence of the length-preserving and area-preserving flow for $\ell$-convex Legendre curves does not necessitate additional curvature conditions.
More insights into the inverse curvature flow for convex curves can be found in \cite{K2019}, and the related literature therein.

The exploration of evolution problems for planar convex curves began with the comprehensive work of Gage and Hamilton \cite{G-H1986}. 
They established the renowned result that 
a convex curve evolves into a point at
finite time under the curve shortening
flow, and the normalized of evolving curve converges to a circle as time goes to infinity. 
As a natural extension of this research, Gage \cite{G1986} introduced a nonlocal area-preserving  flow. The area-preserving flows with nonlocal speed for planar convex curves have since garnered substantial attention and undergone extensive investigations \cite{G-P-T2021, M-C2014, M-P-W2013, P-Y2019, T-W2015, Y-Z-Z2023}. 
Distinct from Gage's area-preserving model, Pan and Yang \cite{P-Y2008} introduced 
an inverse curvature flow  that is a length-preserving flow with nonlocal term. Further investigations into length-preserving flows have been undertaken in \cite{G-P-T2020, T-W2015}.
For immersed curves, corresponding nonlocal area-preserving and length-preserving flows have been discussed by \cite{S-T-W2020,W2023, W-W-Y2018}. 
The nonlocal flows for general polygons are discussed in \cite{G-G-K-O2022, P-Z2020}.
In another significant extension, Grayson \cite{G1987} demonstrated that any embedded planar curve can evolve into a convex one within finite time. This extends the Gage-Hamilton's result to embedded curves and is known as the Gage-Hamilton-Grayson theorem.
This theorem has also been studied in the context of nonlocal area-preserving and length-preserving flows, as detailed in \cite{D2021, N-N2019, N2021, G-P2023}. 
There is another nonlocal flow that is a gradient flow for the isoperimetirc ratio of evolving curve
\cite{J-P2008}. 
Research on
the associated parabolic equations for evolving problems can be found in works such as \cite{A1988, A1990, A1991, G-K1985}.
For a comprehensive understanding various aspects of evolving problems, readers can refer to the monographs \cite{A-C-G-L2020} and \cite{C-Z2001}.


For an $\ell$-convex Legendre curve $\gamma$ with algebraic length
$L$ and algebraic area $A$, 
Li and Wang \cite{L-W2023} have established a set of geometric inequalities: 
\begin{align}
&L^2-4\pi A\ge 0,\label{in-0}\\
    &\int_0^{2\pi}\beta^2 d\theta-2A\ge 0,\label{in-1}\\
    &\int_0^{2\pi}\beta^2 d\theta-2A-8\left(\frac{L^2}{4\pi}-A\right)\ge 0,\label{in-2}\\
    &\frac{1}{12}\int_0^{2\pi}\beta'^2 d\theta-2\left(\frac{L^2}{4\pi}-A\right)\ge 0.\label{in-3}
\end{align}
Inequality \eqref{in-0} constitutes the isoperimetric inequality for $\ell$-convex Legendre curves.
Differing from the classical isoperimetric inequality, it remains valid even for non-simple curves. Notably, the equalities in \eqref{in-0} and \eqref{in-1} hold if and only if $\gamma$ is a circle, while the equalities in \eqref{in-2} and \eqref{in-3} hold if and only if the support function of $\gamma$ follows the form
\begin{equation}\label{supas}
p(\theta)=a_0+a_1\cos\theta+b_1\sin\theta+a_2\cos2\theta+b_2\sin2\theta.   
\end{equation}
The proofs of these inequalities rely on
Fourier series. 
For additional geometric inequalities related to convex curves, we refer the readers to \cite{G-O1999,K-L2021} and the references therein. It should be noted that if the support function of an $\ell$-convex Legendre curve 
is form \eqref{supas}, it is a parallel curve of an astroid centered at $(a_1,b_1)$. An example of the astroid can be seen in
Figure \ref{fig1}(D) and the associated introduction to this kind of curve can be found in \cite[Example 3.3]{L-W2023} (see also \cite{F-T2016}).

The second objective of this paper is to derive some geometric inequalities through the exploration of evolution problems concerning $\ell$-convex Legendre curves.
As no geometric inequalities are employed in the discussion of the length-preserving inverse curvature flow presented in Theorem \ref{thm1.2}, this particular model is more advantageous in the pursuit of deriving geometric inequalities.
To be specific, the length-preserving inverse curvature flow in Theorem \ref{thm1.2} allows us to derive the isoperimetric inequality \eqref{in-0} and to obtain the generalizations for inequalities \eqref{in-1}, \eqref{in-2}, and \eqref{in-3}. 


The inverse curvature flow plays a pivotal role in the derivation of geometric inequalities, and there exists a body of research dedicated to this topic, with notable contributions found in \cite{B-H-W2016, G-L2018, H-I2001}, among others. The locally constrained inverse curvature flow, introduced by Brendle et al. \cite{B-G-L2020} (see also Guan and Li \cite{G-L2015}), was designed to ensure the monotonicity of certain geometric quantities and weaken the initial condition in some cases.
The earliest investigations into the inverse curvature flow for hypersurfaces in Euclidean space can be traced back to Gerhardt \cite{G1990} and Urbas \cite{U1990}. Building upon these foundational works, Guan and Li \cite{G-L2009} demonstrated the monotonicity of the ratio of quermassintegrals under the associated inverse curvature flow. They also provided a proof for Alexandrov-Fenchel inequalities applicable to $k$-convex and star-shaped domains.
In recent times, a lot of research efforts have been dedicated to locally constrained inverse curvature flows, as evidenced by works such as \cite{G-L2021, H-L2022, H-L-W2022, K-W-W-W2022, S-X2019, X2017-1, X2017-2}, among others. 
For the latest developments 
focused on the inverse curvature flow for convex curves, readers can explore  in \cite{G-P-T2020, G-P-T2021, K2019, Y-Z-Z2023}, and the literature therein.



This paper is organized as follows. In Section \ref{sec2}, we present some basic concepts and results about geometric flows for $\ell$-convex Legendre curves. In Section \ref{sec4},
we deal with the area-preserving flow. 
In Section \ref{sec3}, we research the length-preserving inverse curvature flow.
In Section \ref{sec5}, we can get some
geometric inequalities for $\ell$-convex Legendre curves through the length-preserving inverse curvature flow.

\section{Some facts and basic lemmas}\label{sec2}

As mentioned in \cite{L-W2023}, in order to investigate the inverse curve flow for $\ell$-convex curves while preserving the Legendrian condition \eqref{l-con}, the pair $(\gamma,\nu)$ must be considered, as opposed to solely $\gamma$. This involves the following equations
\begin{equation}\label{yl}
    \begin{cases}
        \frac{\partial \gamma}{\partial t}=\frac{\ell f^{\prime}}{\ell^{2}+\beta^{2}} \mu+f \nu, \\
        \frac{\partial \nu}{\partial t}=-\frac{\beta f^{\prime}}{\ell^{2}+\beta^{2}} \mu,
    \end{cases}
\end{equation}
where $f$ is a smooth function about the evolving Legendre curve $\gamma$. 
It is worth noting that, since altering the tangential vector field does not affect the flow, for the sake of simplifying the associated analysis, one can introduce an appropriate tangential vector field $(\frac{\beta^2 f'}{\ell(\ell^2+\beta^2)}\mu, \frac{\beta f'}{(\ell^2+\beta^2)}\mu)$ (see \cite[p.20]{L-W2023}) such that
\begin{equation}\label{jh}
    \begin{cases}
        \frac{\partial \gamma}{\partial t}=\frac{f'}{\ell} \mu+f \nu, \\
        \frac{\partial \nu}{\partial t}=0.
    \end{cases}
\end{equation}
Following some straightforward calculations, one can deduce
\begin{align}
    &\frac{\partial \mu}{\partial t}=0,\label{eqn2.10}\\ 
    &\frac{\partial \ell}{\partial t}=0,\label{eqn2.11}\\
    &\frac{\partial \beta}{\partial t}=\left(\frac{f'}{\ell}\right)'+f\ell.\label{eqn2.12}
\end{align}
As a direct consequence of \eqref{eqn2.11}, it follows that $\ell(\theta,t)=1$ if the initial curve is with $\ell(\theta,0)=1$. Without loss of generality, we can assume $\ell(\theta,0)=1$.
Based on the discussions above and through elementary computations, similar to those outlined in \cite[pp.133-135]{L-W2023}, we can derive the evolution equations for essential geometric quantities.

\begin{lemma}\label{lem2.1}
 The evolution equations for the support function $p$, algebraic length $L$, algebraic area $A$ and the quantity $\beta$ are
\begin{align}
    &\frac{\partial p}{\partial t}=f,\label{eqn2.6}\\
     &\frac{d L}{d t}=\int_0^{2\pi}f d\theta,\label{eqn2.7}\\
     &\frac{d A}{d t}=\int_0^{2\pi}\beta fd\theta,\label{eqn2.8}\\
     &\frac{\partial \beta}{\partial t}=f_{\theta\theta}+f.\label{eqn2.9}
\end{align}   
\end{lemma}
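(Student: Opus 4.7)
The plan is to work with the tangentially-modified flow \eqref{jh} rather than \eqref{yl}, since the paragraph preceding the lemma observes that adding a tangential vector field does not alter the underlying geometric flow, and hence leaves the time derivatives of the scalar invariants $p$, $L$, $A$, $\beta$ unchanged. By \eqref{eqn2.11} together with the normalization $\ell(\theta,0)=1$, we have $\ell(\theta,t)\equiv 1$ throughout the evolution, so all factors of $\ell$ in the subsequent computations drop out. The key identification is that the support function can be read off as $p(\theta,t)=\langle\gamma(\theta,t),\nu(\theta,t)\rangle$, which follows at once from the representation \eqref{qx} using $\nu=(\cos\theta,\sin\theta)$, $\mu=(-\sin\theta,\cos\theta)$, and $\langle\mu,\nu\rangle=0$.

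With this in place, each of the four evolution equations reduces to a one-line computation. First, differentiating $p=\langle\gamma,\nu\rangle$ in $t$ and using $\partial\nu/\partial t=0$ together with $\langle\mu,\nu\rangle=0$ yields
\begin{equation*}
p_t=\left\langle \tfrac{f'}{\ell}\mu+f\nu,\;\nu\right\rangle=f,
\end{equation*}
proving \eqref{eqn2.6}. Differentiating the formula \eqref{L} for $L$ under the integral and substituting $p_t=f$ gives \eqref{eqn2.7} directly. For the algebraic area, I would differentiate \eqref{A}, writing
\begin{equation*}
\frac{dA}{dt}=\frac{1}{2}\int_{\mathbb{S}^1}\!\bigl[p_t(p+p'')+p(p_t+p_t'')\bigr]\,d\theta,
\end{equation*}
and then integrate by parts twice on $\int p\,p_t''\,d\theta$; the boundary contributions vanish because the integration domain is $\mathbb{S}^1$. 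This collapses the bracket to $2p_t(p+p'')$, which together with \eqref{ql} produces \eqref{eqn2.8}. Finally, the identity $\beta=p+p''$ from \eqref{ql} gives $\beta_t=p_t+(p_t)''=f+f_{\theta\theta}$ after commuting the $t$ and $\theta$ derivatives, which is \eqref{eqn2.9}.

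The calculations themselves are entirely routine, so there is no serious analytic obstacle. The only point that deserves care is the justification for using the simplified flow \eqref{jh} when computing evolution equations originally associated with \eqref{yl}: this is precisely the tangential-invariance remark cited from \cite{L-W2023} together with the preservation of $\ell\equiv 1$ via \eqref{eqn2.11}. Once that is accepted, the proof is just a combination of the representation $p=\langle\gamma,\nu\rangle$, the identity $\beta=p+p''$, differentiation under the integral, and integration by parts on $\mathbb{S}^1$.
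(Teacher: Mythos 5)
Your proposal is correct and follows essentially the same route the paper intends: the paper omits the proof, deferring to the "elementary computations" in Li–Wang, and your argument supplies exactly those details — the identification $p=\langle\gamma,\nu\rangle$ with $\nu_t=0$ and $\ell\equiv 1$ under \eqref{jh}, differentiation under the integral, integration by parts on $\mathbb{S}^1$, and $\beta=p+p''$. All four computations check out, so no further comment is needed.
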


\begin{lemma}\label{lem2.2}
Under flow \eqref{jh} when $f=\beta-\lambda(t)$
and $\lambda(t)$ is a nonlocal term, the evolving curve is always $\ell$-convex.
\end{lemma}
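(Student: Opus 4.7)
The plan is to deduce this essentially for free from the evolution equation \eqref{eqn2.11}, which was recorded just before the lemma. Recall that the normalized flow \eqref{jh} was obtained from \eqref{yl} by adding a tangential vector field, and by design it satisfies $\partial \nu/\partial t = 0$. Since $\mu = J(\nu)$, the frame $(\nu(\theta), \mu(\theta))$ is therefore independent of $t$, and because $\ell(\theta,t) = \langle \nu'(\theta,t), \mu(\theta,t)\rangle$, so is $\ell$. This is exactly the content of \eqref{eqn2.11}.

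With this in hand, the proof is a one-line argument. Since $\ell(\theta,0) = 1$ by hypothesis, integrating \eqref{eqn2.11} in $t$ gives $\ell(\theta,t) \equiv 1 > 0$ on the entire time interval of existence of \eqref{jh}. By the definition of $\ell$-convexity stated in Section \ref{sec1.1} (namely, that a Legendre curve is $\ell$-convex precisely when $\ell > 0$), the evolving curve is thus $\ell$-convex for every $t$.

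It is worth emphasizing what the specific choice $f = \beta - \lambda(t)$ does and does not contribute: it plays no role in the preservation of $\ell$-convexity itself, since \eqref{eqn2.11} holds for any smooth $f$ in the flow \eqref{jh}. The role of $f = \beta - \lambda(t)$ appears only in ensuring well-posedness of the flow and, later, in the evolution of the other geometric quantities ($p$, $L$, $A$, $\beta$) recorded in Lemma \ref{lem2.1}. Consequently, no substantive obstacle arises in the proof; the subtlety, if any, lies only in having constructed the tangentially-modified flow \eqref{jh} correctly in the first place, which is done in \cite{L-W2023}.
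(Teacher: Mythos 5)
There is a genuine gap: you have proved only the trivial half of the lemma and skipped the part that carries the actual content. In this paper ``the evolving curve is $\ell$-convex'' does not reduce to the pointwise condition $\ell>0$ on whatever the PDE produces; one must know that the evolving object is still a closed $\ell$-convex Legendre curve in the sense of \eqref{qx}, i.e.\ that its curvature datum $\beta(\cdot,t)$ is genuinely the $\beta=p+p''$ of a $2\pi$-periodic support function. By \cite[Remark 3.5]{L-W2023} this is equivalent to the compatibility conditions \eqref{btj},
\[
\int_{0}^{2\pi}\beta(\theta,t)\cos\theta\, d\theta=\int_{0}^{2\pi}\beta(\theta,t)\sin\theta\, d\theta=0,
\]
which are exactly the solvability conditions for $p''+p=\beta$ on $\mathbb{S}^1$ (the kernel of $\frac{d^2}{d\theta^2}+1$ being spanned by $\cos\theta,\sin\theta$), equivalently the closing-up condition $\int_0^{2\pi}\gamma'\,d\theta=\int_0^{2\pi}\beta\mu\,d\theta=0$. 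Since the flow is analyzed through the scalar evolution \eqref{eqn2.9} for $\beta$, preservation of \eqref{btj} must be verified; the paper does this by computing $\frac{d}{dt}\int_0^{2\pi}\beta\cos\theta\,d\theta=\int_0^{2\pi}(\beta_{\theta\theta}+\beta-\lambda(t))\cos\theta\,d\theta=0$ (and likewise with $\sin\theta$), then invoking $\ell$-convexity of $\gamma_0$ at $t=0$. Your proof contains none of this.

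The omission is not cosmetic: the identities \eqref{btj} are precisely what feed Corollary \ref{lem-gd} and the improved Wirtinger inequality \eqref{gjgc} with constant $4$ in Lemma \ref{lem2.4}, on which the convergence results rest. Your observation that $\ell(\theta,t)\equiv 1$ follows from \eqref{eqn2.11} is correct but is the easy, essentially definitional part; and your closing remark that ``$f=\beta-\lambda(t)$ plays no role'' is misleading — the specific form of $f$ enters the paper's computation through $\int_0^{2\pi}\lambda(t)\cos\theta\,d\theta=0$ and through the cancellation of $\int\beta_{\theta\theta}\cos\theta\,d\theta$ against $\int\beta\cos\theta\,d\theta$. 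To repair the proof, carry out the paper's computation (or, equivalently, verify directly that the Legendre condition $\langle\gamma',\nu\rangle=0$ and periodicity of $\gamma$ are preserved and that a periodic support function persists); the statement about $\ell$ alone does not suffice.
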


\begin{proof}
In order to show that $\gamma(\theta,t)$ is still $\ell$-convex, by \cite[Remark 3.5]{L-W2023}, we only need to check 
\begin{equation}\label{btj}
   \int_{0}^{2\pi}\beta(\theta,t)\cos\theta d\theta= \int_{0}^{2\pi}\beta(\theta,t)\sin\theta d\theta=0.
\end{equation} 
By \eqref{eqn2.9}, one has
\begin{align*}
\frac{d}{ dt}\int_{0}^{2\pi }  \beta \cos \theta d\theta
&=\int_{0}^{2\pi }\beta _{t}\cos  \theta d\theta \\
&=\int_{0}^{2\pi }\left ( \beta_{\theta\theta}+\beta -\lambda(t)   \right ) \cos \theta d\theta\\
&=-\int_{0}^{2\pi }\beta\cos  \theta d\theta +\int_{0}^{2\pi }\beta \cos  \theta d\theta-\lambda(t)\int_{0}^{2\pi }\cos \theta d\theta
=0.
\end{align*}
Similarly, $\frac{d}{ dt}\int_{0}^{2\pi }  \beta \sin \theta d\theta=0$. Again from the fact that $\gamma_0$ is $\ell$-convex, it yields
\begin{align*}
    &\int_{0}^{2\pi}\beta(\theta,t)\cos\theta d\theta=\int_{0}^{2\pi}\beta(\theta,0)\cos\theta d\theta=0,\\
    &\int_{0}^{2\pi}\beta(\theta,t)\sin\theta d\theta=\int_{0}^{2\pi}\beta(\theta,0)\sin\theta d\theta=0,
\end{align*}
which concludes the desired result.
\end{proof}

\begin{corollary}\label{lem-gd}
Under flow \eqref{jh} when $f=\beta-\lambda(t)$
and $\lambda(t)$ is a nonlocal term, then
\begin{equation}\label{gdds}
\int_{0}^{2\pi }  \beta^{(i)} \cos \theta d\theta=\int_{0}^{2\pi }  \beta^{(i)} \sin \theta d\theta=0    
\end{equation}
holds for $i\ge 1$, where $\beta^{(i)}$ represents the $i$-th derivative
of $\beta$.
\end{corollary}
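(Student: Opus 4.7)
The plan is to obtain Corollary \ref{lem-gd} as a direct consequence of Lemma \ref{lem2.2} via repeated integration by parts, using only the smoothness and $2\pi$-periodicity of $\beta(\cdot,t)$ on $\mathbb{S}^1$. Since $\gamma(\cdot,t)$ is an $\ell$-convex Legendre curve parametrized by $\theta\in\mathbb{S}^1$, the function $\beta(\theta,t)=p(\theta,t)+p_{\theta\theta}(\theta,t)$ is smooth and $2\pi$-periodic in $\theta$, and the same is true of every derivative $\beta^{(i)}(\theta,t)$. In particular, all boundary terms arising from integration by parts over $[0,2\pi]$ will vanish.

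First I would handle the $\cos\theta$ family. For any $i\ge 1$, integrate by parts to obtain
\begin{equation*}
\int_0^{2\pi}\beta^{(i)}(\theta,t)\cos\theta\,d\theta
=\Bigl[\beta^{(i-1)}\cos\theta\Bigr]_0^{2\pi}+\int_0^{2\pi}\beta^{(i-1)}(\theta,t)\sin\theta\,d\theta,
\end{equation*}
where the boundary term vanishes by periodicity. Iterating this and the analogous identity that pulls the sine back to a cosine (with sign changes), after $i$ applications one lands on one of the four quantities
\begin{equation*}
\pm\int_0^{2\pi}\beta(\theta,t)\cos\theta\,d\theta,\qquad \pm\int_0^{2\pi}\beta(\theta,t)\sin\theta\,d\theta,
\end{equation*}
the choice being determined by the residue of $i$ modulo $4$. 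Each of these four integrals is zero by \eqref{btj} in Lemma \ref{lem2.2}. The identical argument, swapping the roles of $\cos\theta$ and $\sin\theta$ in the first step, proves the statement for $\int_0^{2\pi}\beta^{(i)}\sin\theta\,d\theta$.

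I would likely present this formally as an induction on $i$: the base case $i=0$ is exactly Lemma \ref{lem2.2}, and the inductive step is the single integration by parts above, which reduces $\int\beta^{(i)}\cos\theta\,d\theta$ to $\int\beta^{(i-1)}\sin\theta\,d\theta$ (and symmetrically for sine). There is no real obstacle here: the only thing one needs is that the boundary terms vanish, which is immediate from periodicity on $\mathbb{S}^1$. The content of the corollary is thus purely a consequence of Lemma \ref{lem2.2} together with the smoothness of $\beta$ inherited from the $\ell$-convex Legendre structure.
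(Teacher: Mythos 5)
Your proof is correct and follows essentially the same route as the paper: induction on $i$, with a single integration by parts (boundary terms vanishing by $2\pi$-periodicity) reducing $\int_0^{2\pi}\beta^{(i)}\cos\theta\,d\theta$ to $\pm\int_0^{2\pi}\beta^{(i-1)}\sin\theta\,d\theta$ and vice versa, anchored at the base case \eqref{btj} from Lemma \ref{lem2.2}. No substantive difference from the paper's argument.
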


\begin{proof}
For $i=1$, from integration by parts and \eqref{btj}, it yields
\begin{align*}
&\int_{0}^{2\pi }\beta _{\theta}\cos  \theta d\theta= \int_{0}^{2\pi }\beta\sin  \theta d\theta=0,\\
&\int_{0}^{2\pi }\beta _{\theta}\sin  \theta d\theta
=-\int_{0}^{2\pi }\beta\cos  \theta d\theta=0.
\end{align*}
Assume that the assertion of \eqref{gdds} holds
for $i-1$. By the induction hypothesis, one can compute
\begin{align*}
  \int_{0}^{2\pi }\beta^{(i)}\cos  \theta d\theta= \int_{0}^{2\pi }\beta^{(i-1)}\sin  \theta d\theta=0.  
\end{align*}
In the same way, $\int_{0}^{2\pi }\beta^{(i)}\sin  \theta d\theta=0$.
\end{proof}

\begin{lemma}\label{lem2.3}
Under flow \eqref{jh} when $f=\beta-\lambda(t)$,
if $\lambda(t)$ has uniform bounds, then
this flow exists in the time interval $[0,+\infty)$. 
\end{lemma}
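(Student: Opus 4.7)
The flow \eqref{jh} with $f=\beta-\lambda(t)$ translates, via \eqref{eqn2.6} and the identity $\beta=p+p_{\theta\theta}$, into the linear parabolic equation
\begin{equation*}
p_t \;=\; p_{\theta\theta}+p-\lambda(t)
\end{equation*}
on $\mathbb{S}^1$. Since $\lambda(t)$ is assumed uniformly bounded, my plan is a standard short-time-existence plus continuation argument, in which the a priori estimates come from an explicit Fourier decomposition.

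First, I would invoke standard linear parabolic theory to obtain a unique smooth solution on some maximal existence interval $[0,T^*)$; Lemma \ref{lem2.2} guarantees the evolving curve stays $\ell$-convex along this interval, so $p$ remains a legitimate support function. Next, on $[0,T^*)$ I would expand
\begin{equation*}
p(\theta,t)=a_0(t)+\sum_{k\ge 1}\bigl(a_k(t)\cos k\theta+b_k(t)\sin k\theta\bigr)
\end{equation*}
and match Fourier modes in the PDE. This decouples into the scalar ODEs
\begin{equation*}
a_0'(t)=a_0(t)-\lambda(t),\qquad a_k'(t)=(1-k^2)a_k(t),\qquad b_k'(t)=(1-k^2)b_k(t)\quad(k\ge 1),
\end{equation*}
which can be integrated explicitly. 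Writing $M$ for the uniform bound on $|\lambda|$, we obtain $|a_0(t)|\le e^{t}|a_0(0)|+M(e^{t}-1)$; for $k=1$ the coefficients are constant (consistent with Steiner-point preservation); and for $k\ge 2$ the modes decay as $|a_k(t)|+|b_k(t)|\le e^{-(k^2-1)t}\bigl(|a_k(0)|+|b_k(0)|\bigr)$.

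Combining the smoothness of $\gamma_0$ (which gives rapid decay of $a_k(0),b_k(0)$ in $k$) with the above estimates, for every $T<\infty$ I would derive uniform $C^{m}$ bounds on $p(\cdot,t)$ over $[0,T]$ for every $m$. If $T^*<\infty$, these a priori $C^{\infty}$ bounds let me pass to a smooth limit $p(\cdot,T^*)$ and restart the flow past $T^*$, contradicting maximality; hence $T^*=+\infty$. The only delicate point I foresee is that $\lambda(t)$ may in principle depend nonlocally on $p$, which would normally render the equation quasilinear; the hypothesis of uniform bounds on $\lambda$ is precisely what sidesteps this, letting me treat the PDE as linear with a bounded source. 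The exponential growth of the zero mode is harmless since only existence on $[0,\infty)$ (not decay in $t$) is claimed here.
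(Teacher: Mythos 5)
Your proof is correct, but it takes a genuinely different route from the paper's. The paper works with the equation $\beta_t=\beta_{\theta\theta}+\beta-\lambda(t)$ (equation \eqref{eqn2.9}), observes it is uniformly parabolic once $\lambda$ is bounded, bounds $|\beta|$ by the maximum principle, and then invokes Krylov's regularity theory for uniformly parabolic equations to get bounds $|\beta^{(i)}|\le M_i$ on all derivatives, from which long-time existence follows. You instead work at the level of the support function, $p_t=p_{\theta\theta}+p-\lambda(t)$, and solve the equation explicitly mode by mode: the $k\ge 1$ equations genuinely decouple from $\lambda$, the $k\ge2$ modes decay like $e^{(1-k^2)t}$, the $k=1$ mode is constant, and only the zero mode can grow, at rate at most $e^{t}$ with a forcing controlled by the bound on $\lambda$. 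Combined with the rapid decay in $k$ of the initial Fourier coefficients, this gives $C^m$ bounds on every finite time interval and hence continuation; in fact the explicit formula makes the continuation step almost superfluous. Your approach is more elementary and transparent (no Schauder/Krylov machinery, and it isolates exactly where growth can occur), and your handling of the nonlocality of $\lambda$ — treating it as a known bounded source, which is precisely what the hypothesis licenses — matches the paper's logic. The trade-off is that your estimates are only uniform on compact time intervals (the zero mode may grow like $e^{T}$), whereas the paper claims, and later uses (e.g.\ in Proposition \ref{pro4.2} and Lemma \ref{lem2.4}), time-independent bounds $|\beta^{(i)}|\le M_i$; for the statement of Lemma \ref{lem2.3} itself, which asserts only existence on $[0,+\infty)$, your finite-time bounds are entirely sufficient.
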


\begin{proof}
Since $\gamma_0$
is an $\ell$-convex Legendre curve, $|\beta(\theta,0)|\le M_0$
holds for a constant $M_0$ only depending on $\gamma_0$.
Equation \eqref{eqn2.9} is uniformly parabolic when
$\lambda(t)$ is uniformly bounded.
By the maximum principle for uniformly parabolic equations, one has $|\beta(\theta,t)|\le M_0$ 
on $\mathbb{S}^1\times [0,T)$. From the standard regularity theory for uniformly parabolic
equations (see Krylov \cite{K1987}), it yields that 
\begin{equation*}
    |\beta^{(i)}|\le M_i
\end{equation*}
holds for $i\ge 1$, where 
$M_i$ is a constant independent of time. Hence, the flow \eqref{jh} exists in the time interval $[0,+\infty)$.    
\end{proof}

\begin{lemma}\label{lem2.4}
Under flow \eqref{jh} when $f=\beta-\lambda(t)$,
if the evolving curve is always $\ell$-convex
and the flow exists in the time interval $[0,+\infty)$, then
\begin{align*}
    \left|\beta-\frac{L}{2\pi}\right|\le \Lambda_1 e^{-3t},
\end{align*}
where $\Lambda_1$ is a constant only depending on 
$\gamma_0$.
\end{lemma}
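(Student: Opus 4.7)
The plan is to reduce the claim to a linear Fourier analysis on $\mathbb{S}^1$. Set $w(\theta,t):=\beta(\theta,t)-\frac{L(t)}{2\pi}$. Using \eqref{eqn2.9} with $f=\beta-\lambda(t)$ and \eqref{eqn2.7}, I would first compute
\begin{equation*}
w_t=\beta_{\theta\theta}+\beta-\lambda(t)-\frac{1}{2\pi}\int_0^{2\pi}(\beta-\lambda(t))\,d\theta=w_{\theta\theta}+w,
\end{equation*}
so $w$ satisfies a clean autonomous linear equation in which the nonlocal term $\lambda(t)$ has been absorbed into the recentering. This is what makes the estimate possible without any knowledge of $\lambda(t)$.

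Next I would verify that $w$ has vanishing Fourier modes of orders $0$ and $1$ for all times. For $k=0$, integration of \eqref{ql} gives $\int_0^{2\pi}\beta\,d\theta=L$, hence $\int_0^{2\pi}w\,d\theta=0$. For $k=1$, this is exactly the content of Lemma \ref{lem2.2} applied to $\beta$ (the $L/(2\pi)$ subtraction contributes nothing against $\cos\theta,\sin\theta$). So if I expand $w(\theta,t)=\sum_{k\ge 2}(a_k(t)\cos k\theta+b_k(t)\sin k\theta)$, each coefficient solves the scalar ODE $a_k'(t)=(1-k^2)a_k(t)$, giving
\begin{equation*}
a_k(t)=a_k(0)\,e^{(1-k^2)t},\qquad b_k(t)=b_k(0)\,e^{(1-k^2)t}.
\end{equation*}
Since $k\ge 2$ implies $1-k^2\le -3$, every mode decays at least as fast as $e^{-3t}$, with the critical mode being $k=2$.

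Finally I would upgrade this mode-by-mode decay to a pointwise bound. Writing
\begin{equation*}
|w(\theta,t)|\le\sum_{k\ge 2}(|a_k(0)|+|b_k(0)|)\,e^{(1-k^2)t}\le e^{-3t}\sum_{k\ge 2}(|a_k(0)|+|b_k(0)|),
\end{equation*}
so it suffices to bound $\sum_{k\ge 2}(|a_k(0)|+|b_k(0)|)$ by a constant depending only on $\gamma_0$. This is where the only real work sits: the initial datum $\beta(\cdot,0)=p_0+p_0''$ is smooth on $\mathbb{S}^1$, so by standard Fourier decay $|a_k(0)|,|b_k(0)|=O(k^{-2})$ and the series converges absolutely, yielding the desired $\Lambda_1$ depending only on $\gamma_0$.

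The main obstacle I anticipate is simply the last step: ensuring the pointwise (rather than $L^2$) bound with the sharp rate $e^{-3t}$. An alternative, should one prefer to avoid invoking Fourier decay of the initial data directly, is to run an energy estimate showing $\|w\|_{L^2}\le Ce^{-3t}$ via the Poincaré-type inequality $\int w_\theta^2\,d\theta\ge 4\int w^2\,d\theta$ valid for functions orthogonal to $\{1,\cos\theta,\sin\theta\}$, then combine with the uniform higher-derivative bounds from Lemma \ref{lem2.3} and Sobolev interpolation on $\mathbb{S}^1$ to pass from $L^2$ to $C^0$; some care is needed to keep the full $e^{-3t}$ rate through the interpolation, which is why I favor the direct Fourier route above.
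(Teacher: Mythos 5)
Your argument is correct, and it takes a genuinely different route from the paper. The paper works at the level of the $L^2$ energy $\int_0^{2\pi}\bigl(\beta-\frac{L}{2\pi}\bigr)^2 d\theta$: it differentiates, kills the nonlocal term using $\int_0^{2\pi}\bigl(\beta-\frac{L}{2\pi}\bigr)d\theta=0$, applies the Wirtinger-type inequality \eqref{gjgc} to get exponential decay $\le Ce^{-6t}$ of the $L^2$ norm, and then invokes the Sobolev inequality from Gage--Hamilton to pass to a pointwise bound. You instead observe that $w=\beta-\frac{L}{2\pi}$ satisfies the autonomous linear equation $w_t=w_{\theta\theta}+w$ (the cancellation of $\lambda(t)$ is exactly right, since $L_t=L-2\pi\lambda(t)$), diagonalize in Fourier modes, use the vanishing of the $k=0,1$ modes (from $\int\beta\,d\theta=L$ and Lemma \ref{lem2.2}) to restrict to $k\ge 2$, and sum the mode-by-mode decay $e^{(1-k^2)t}\le e^{-3t}$ against the absolutely summable initial coefficients of the smooth datum $\beta(\cdot,0)$. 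Your approach buys a cleaner and in fact more careful endgame: passing from $L^2$ decay at rate $e^{-3t}$ to a $C^0$ bound at the \emph{same} rate via Sobolev requires comparable decay of $\|w_\theta\|_{L^2}$ (not merely its uniform boundedness from Lemma \ref{lem2.3}), a point the paper's one-line citation glosses over and which your direct summation sidesteps entirely. The paper's energy method has the advantage of generalizing more readily to settings where the equation does not linearize so cleanly, but here the linearity makes your route the sharper one. The only hypotheses you should make explicit are that the smoothness and global existence assumed in the lemma justify term-by-term differentiation of the Fourier coefficients $a_k'(t)=\frac{1}{\pi}\int_0^{2\pi}w_t\cos k\theta\,d\theta=(1-k^2)a_k(t)$, and that uniform convergence of the Fourier series of the smooth function $w(\cdot,t)$ licenses the final pointwise estimate; both are immediate.
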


\begin{proof}
It follows from \eqref{eqn2.9} and integration by parts that
\begin{align*}
&\frac{d}{dt}\int_0^{2\pi}\left(\beta-\frac{L}{2\pi}\right)^2d\theta\\
=&2 \int_0^{2\pi}\left(\beta-\frac{L}{2\pi}\right)\left(\beta_t-\frac{L_t}{2\pi}\right)d\theta\\
=&2\int_0^{2\pi}\left(\beta-\frac{L}{2\pi}\right)\left(\beta_t-\frac{L_t}{2\pi}\right)d\theta\\
=&2\int_0^{2\pi}\left(\beta-\frac{L}{2\pi}\right)\left(\beta_{\theta\theta}+\beta-\lambda(t)-\frac{L_t}{2\pi}\right)d\theta\\
=&-2\int_0^{2\pi}\left(\beta-\frac{L}{2\pi}\right)_{\theta}^2d\theta+2\int_0^{2\pi}\left(\beta-\frac{L}{2\pi}\right)^2d\theta\\
&+2\left(\frac{L}{2\pi}-\lambda(t)-\frac{L_t}{2\pi}\right)\int_0^{2\pi}\left(\beta-\frac{L}{2\pi}\right)d\theta
\end{align*}
Note that 
\begin{align*}
    &\int_0^{2\pi}\left(\beta-\frac{L}{2\pi}\right)d\theta=0,\\
    &\int_0^{2\pi}\left(\beta-\frac{L}{2\pi}\right)\cos\theta d\theta=0,\\
    &\int_0^{2\pi}\left(\beta-\frac{L}{2\pi}\right)\sin \theta d\theta=0,
\end{align*}
one has
\begin{align}\label{gjgc}
    \int_0^{2\pi}\left(\beta-\frac{L}{2\pi}\right)_{\theta}^2d\theta\ge4 \int_0^{2\pi}\left(\beta-\frac{L}{2\pi}\right)^2d\theta.
\end{align}
Together with \eqref{gjgc}, it has
\begin{align*}
 \frac{d}{dt}\int_0^{2\pi}\left(\beta-\frac{L}{2\pi}\right)^2d\theta\le -6  \int_0^{2\pi}\left(\beta-\frac{L}{2\pi}\right)^2d\theta, 
\end{align*}
which deduces that
\begin{align*}
  \int_0^{2\pi}\left(\beta-\frac{L}{2\pi}\right)^2d\theta\le \int_0^{2\pi}\left(\beta_0(\theta)-\frac{L_0}{2\pi}\right)^2d\theta\cdot e^{-6t}.   
\end{align*}
From the above expression and the Sobolev inequality in \cite[p.90]{G-H1986},
the desired estimate is achieved.
\end{proof}

\section{The area-preserving flow}\label{sec4}

When the algebraic area of initial curve $A_0>0$,
taking
$\lambda(t)=\frac{1}{L}\int_{0}^{2\pi}\beta^2 d\theta$ in \eqref{model} which is equivalent to letting $f=\beta-\lambda(t)$ in \eqref{jh}, 
we can deal with the equivalent evolution problem to \eqref{model} as follows
\begin{equation}\label{jh2}
    \begin{cases}
        \frac{\partial \gamma}{\partial t}=(\beta-\lambda(t))'\mu+(\beta-\lambda(t)) \nu, \\
        \frac{\partial \nu}{\partial t}=0.
    \end{cases}
\end{equation}

\begin{lemma}\label{jx-2}
Under flow \eqref{jh2}, the algebraic length
of $\gamma(\theta,t)$ is decreasing and the associated algebraic area is fixed.
\end{lemma}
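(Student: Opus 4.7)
The plan is to prove both statements by direct substitution into the evolution equations for $L$ and $A$ established in Lemma \ref{lem2.1}, using $f = \beta - \lambda(t)$ with $\lambda(t) = \frac{1}{L}\int_0^{2\pi}\beta^2\,d\theta$. The key algebraic input is the identity $\int_0^{2\pi}\beta\,d\theta = L$, which follows from $\beta = p + p''$ together with formula \eqref{L}, since $\int_0^{2\pi} p''\,d\theta = 0$ on $\mathbb{S}^1$.

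First I would handle area preservation. From \eqref{eqn2.8},
\begin{equation*}
\frac{dA}{dt} = \int_0^{2\pi}\beta(\beta - \lambda(t))\,d\theta = \int_0^{2\pi}\beta^2\,d\theta - \lambda(t)\int_0^{2\pi}\beta\,d\theta.
\end{equation*}
Using $\int_0^{2\pi}\beta\,d\theta = L$ and the explicit form of $\lambda(t)$, this collapses to $\int_0^{2\pi}\beta^2\,d\theta - L\cdot\frac{1}{L}\int_0^{2\pi}\beta^2\,d\theta = 0$, so $A(t) \equiv A_0$.

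Next I would handle length monotonicity. From \eqref{eqn2.7},
\begin{equation*}
\frac{dL}{dt} = \int_0^{2\pi}(\beta - \lambda(t))\,d\theta = L - 2\pi\lambda(t) = \frac{L^2 - 2\pi\int_0^{2\pi}\beta^2\,d\theta}{L}.
\end{equation*}
The Cauchy--Schwarz inequality applied to $L = \int_0^{2\pi}\beta\,d\theta$ gives $L^2 \le 2\pi\int_0^{2\pi}\beta^2\,d\theta$, with equality iff $\beta$ is constant (i.e., the curve is a circle). Hence $\frac{dL}{dt} \le 0$, as desired.

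The one point that requires a brief comment rather than a calculation is that the denominator $L$ in $\lambda(t)$ must remain positive so that the flow is well-defined and the computation above is valid. Since $A_0 > 0$, the isoperimetric inequality \eqref{in-0} forces $L_0 > 0$, and the monotonicity just derived combined with the preserved area $A(t) \equiv A_0 > 0$ yields $L(t)^2 \ge 4\pi A_0 > 0$ for as long as the flow exists; thus $L(t)$ stays bounded below by $2\sqrt{\pi A_0}$, which both keeps $\lambda(t)$ finite and confirms that the monotone decrease cannot drive $L$ to zero. No serious obstacle is anticipated: the argument is essentially an application of Cauchy--Schwarz together with the boundary identity $\int\beta\,d\theta = L$.
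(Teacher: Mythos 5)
Your proof is correct and follows essentially the same route as the paper: substitute $f=\beta-\lambda(t)$ into the evolution equations of Lemma \ref{lem2.1}, use $\int_0^{2\pi}\beta\,d\theta=L$ to see $\frac{dA}{dt}=0$, and apply Cauchy--Schwarz to get $\frac{dL}{dt}\le 0$. Your added remark that $A_0>0$ and the isoperimetric inequality keep $L(t)\ge 2\sqrt{\pi A_0}>0$ (so that $\lambda(t)$ is well-defined) is a point the paper defers to Proposition \ref{pro4.1}, but it is a sensible inclusion here.
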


\begin{proof}
From \eqref{L} and \eqref{ql}, it follows that
\begin{align*}
    \frac{dL}{dt}&=\int_0^{2\pi} \beta d\theta-\frac{2\pi}{L}\int_0^{2\pi}\beta^2 d\theta\\
    &=\frac{1}{L}\left(\left(\int_0^{2\pi} \beta d\theta\right)^2-2\pi \int_0^{2\pi} \beta^2 d\theta\right).
\end{align*}
Combining with the Cauchy-Schwarz inequality,
it yields $\frac{dL}{dt}\le 0$, which implies that
the algebraic length of $\gamma(\theta,t)$ is decreasing.  
By \eqref{eqn2.7}, direct calculation shows that the algebraic area
of $\gamma(\theta,t)$ is fixed.
\end{proof}

\begin{proposition}\label{pro4.1}
Under flow \eqref{jh2},  the nonlocal term $\lambda(t)$ is uniformly bounded.  
\end{proposition}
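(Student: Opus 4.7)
The plan is to bound the nonlocal term $\lambda(t)=\frac{1}{L}\int_{0}^{2\pi}\beta^2 d\theta$ from above by controlling the denominator $L$ from below and the numerator $\int_{0}^{2\pi}\beta^{2}d\theta$ from above, both uniformly in $t$. The trivial bound $\lambda(t)\ge 0$ takes care of a lower bound.

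For the denominator, I will use the conservation of area together with the isoperimetric inequality. By Lemma~\ref{jx-2}, the algebraic area is preserved, so $A(t)=A_{0}>0$ throughout the flow. By Lemma~\ref{lem2.2} the evolving curve stays $\ell$-convex, so the isoperimetric inequality \eqref{in-0} is available and yields $L(t)^{2}\ge 4\pi A_{0}$. Since $L_{0}>0$ is forced by $A_{0}>0$ and $L(t)$ is continuous, the sign cannot flip, hence $L(t)\ge 2\sqrt{\pi A_{0}}>0$ for all $t\ge 0$.

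For the numerator, the key step — and, conceptually, the main point of the proof — is to observe that $\int_{0}^{2\pi}\beta^{2}d\theta$ is monotone non-increasing along the flow. Using the evolution equation $\beta_{t}=\beta_{\theta\theta}+\beta-\lambda(t)$ from Lemma~\ref{lem2.1} and integrating by parts in $\theta$, one finds
\begin{equation*}
\frac{d}{dt}\int_{0}^{2\pi}\beta^{2}d\theta = -2\int_{0}^{2\pi}\beta_{\theta}^{2}d\theta + 2\int_{0}^{2\pi}\beta^{2}d\theta - 2\lambda(t)\, L.
\end{equation*}
The definition of $\lambda(t)$ is precisely designed so that $\lambda(t)L=\int_{0}^{2\pi}\beta^{2}d\theta$, which cancels the positive zeroth-order term and leaves $\frac{d}{dt}\int_{0}^{2\pi}\beta^{2}d\theta=-2\int_{0}^{2\pi}\beta_{\theta}^{2}d\theta\le 0$. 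Thus $\int_{0}^{2\pi}\beta^{2}d\theta\le \int_{0}^{2\pi}\beta_{0}^{2}d\theta$ for all $t\ge 0$.

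Combining both estimates gives $0\le \lambda(t)\le \frac{1}{2\sqrt{\pi A_{0}}}\int_{0}^{2\pi}\beta_{0}^{2}d\theta$, which depends only on $\gamma_{0}$. The only subtle point I anticipate is the exact cancellation in the second step; it is not deep but is the reason the particular nonlocal speed is natural for the area-preserving model, and it has to be set up carefully before integrating by parts.
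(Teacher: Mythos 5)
Your proof is correct and follows essentially the same route as the paper: a lower bound $L(t)\ge 2\sqrt{\pi A_0}$ from area preservation plus the isoperimetric inequality \eqref{in-0}, and an upper bound on $\int_0^{2\pi}\beta^2\,d\theta$ from the monotonicity computation $\frac{d}{dt}\int_0^{2\pi}\beta^2\,d\theta=-2\int_0^{2\pi}\beta_\theta^2\,d\theta\le 0$, where the cancellation $\lambda(t)L=\int_0^{2\pi}\beta^2\,d\theta$ is exactly the one the paper exploits. The only cosmetic difference is that the paper invokes \eqref{in-1} for a positive lower bound on $\lambda(t)$, whereas you settle for the trivial bound $\lambda(t)\ge 0$, which suffices for uniform boundedness.
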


\begin{proof}
Lemma \ref{jx-2} and \eqref{in-0} tell us
\begin{align*}
   L_0\ge  L(t)\ge 2\sqrt{\pi A(t)}=2\sqrt{\pi A_0}.
\end{align*}

To show the bounds for the term $\lambda(t)$, consider the quantity $F=\int_{0}^{2\pi } \beta ^{2} d\theta$. From integration by parts, \eqref{eqn2.9}, \eqref{L} and \eqref{ql}, it yields
\begin{align*}
\frac{dF}{ dt}&=2\int_{0}^{2\pi } \beta\beta _{t}d\theta  \\
&=2\int_{0}^{2\pi } \beta\left ( \beta _{\theta \theta }+\beta -\lambda \left ( t \right )    \right ) d\theta \\
&=-2\int_{0}^{2\pi }\beta _{\theta }^{2} d\theta  +2\int_{0}^{2\pi }\beta^{2} d\theta-2\lambda \left ( t \right ) L\\
&=-2\int_{0}^{2\pi }\beta _{\theta }^{2} d\theta+2\int_{0}^{2\pi }\beta ^{2} d\theta-2\int_{0}^{2\pi }\beta ^{2} d\theta\le 0.
\end{align*}
This implies that $\int_{0}^{2\pi } \beta ^{2} d\theta\le \int_{0}^{2\pi } \beta ^{2}(\theta,0) d\theta$. 
Together with the inequality \eqref{in-1} 
and the fact that the algebraic area of the
evolving curve is fixed, the desired bounds
for $\lambda(t)$ is achieved.
\end{proof}

\begin{proposition}\label{pro4.2}
If the initial $\ell$-convex Legendre curve $\gamma_0$ is of algebraic area $A_0>0$, then the limiting curve is a circle with radius $\sqrt{\frac{A_0}{\pi}}$ under flow \eqref{jh2}. 
\end{proposition}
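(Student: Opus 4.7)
The plan is to exploit the exponential decay of $\beta$ toward the spatial average $L/(2\pi)$ provided by Lemma \ref{lem2.4}, combine it with monotone convergence of $L(t)$, and then force the limiting circle to have the correct radius via the preserved algebraic area.

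First I would establish long-time existence and uniform smoothness. Proposition \ref{pro4.1} bounds $\lambda(t)$ uniformly, so Lemma \ref{lem2.3} gives a smooth solution on $[0,\infty)$ with $|\beta^{(i)}(\cdot,t)| \le M_i$ for every $i \ge 0$. Lemma \ref{lem2.4} then provides the $C^0$ estimate $|\beta - L/(2\pi)| \le \Lambda_1 e^{-3t}$. Interpolating this $C^0$ decay against the uniform $C^\infty$ bounds upgrades it to exponential decay of $\beta(\cdot,t) - L(t)/(2\pi)$ in every $C^k$ norm.

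Next I would pin down $L(t)$. By Lemma \ref{jx-2}, $L(t)$ is non-increasing, while the isoperimetric inequality \eqref{in-0} together with area preservation yields $L(t) \ge 2\sqrt{\pi A_0} > 0$. Hence $L(t) \to L_\infty$ for some $L_\infty \ge 2\sqrt{\pi A_0}$, and consequently $\beta \to L_\infty/(2\pi)$ in $C^\infty$. I would also verify that the Steiner coordinates $(a_1,b_1)$ are conserved under the flow: differentiating $a_1(t) = \frac{1}{\pi}\int_0^{2\pi} p\cos\theta\, d\theta$ and using \eqref{eqn2.6} with Lemma \ref{lem2.2} gives $\frac{d a_1}{dt} = \frac{1}{\pi}\int_0^{2\pi}(\beta - \lambda(t))\cos\theta\, d\theta = 0$, and symmetrically for $b_1$.

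Finally I would identify the limit. Writing $p$ in the Fourier expansion \eqref{exp1}, the relation $\beta = p + p''$ means that the smooth decay of $\beta - L(t)/(2\pi)$ translates, via Parseval, into decay of $(1-k^2)(a_k(t), b_k(t))$ for all $k \ge 2$, while $a_0(t) = L(t)/(2\pi) \to L_\infty/(2\pi)$ and $(a_1,b_1)$ is constant. Thus $p(\cdot,t)$ converges in $C^\infty$ to $p_\infty(\theta) = L_\infty/(2\pi) + a_1\cos\theta + b_1\sin\theta$, the support function of a circle of radius $L_\infty/(2\pi)$ centered at the preserved Steiner point. Since the area is frozen at $A_0$ and the area of this circle is $L_\infty^2/(4\pi)$, we conclude $L_\infty = 2\sqrt{\pi A_0}$, giving the claimed radius $\sqrt{A_0/\pi}$. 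The main technical hurdle is the upgrade from the $C^0$ exponential estimate of Lemma \ref{lem2.4} to full $C^\infty$ convergence of $p$ itself (rather than only of $\beta$); this is handled by combining the standard interpolation argument with the Fourier characterization of $p$ in terms of $\beta$.
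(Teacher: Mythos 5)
Your proposal is correct, and it reaches the conclusion by a partly different route than the paper. For the convergence of $\beta$, the paper runs a separate second-derivative argument on $F=\int_0^{2\pi}\beta^2\,d\theta$ (showing $F''\ge -6F'$, hence $F'\to0$ and $\beta_\theta\to0$) and only afterwards identifies the limiting constant via the conserved area; you instead invoke Lemma \ref{lem2.4} directly, which already gives exponential decay of $\beta-\frac{L}{2\pi}$, and pin the constant down as $L_\infty/(2\pi)$ from the monotone convergence of $L$ supplied by Lemma \ref{jx-2} and \eqref{in-0} --- this is more economical, since Lemma \ref{lem2.4} makes the $F$-computation essentially redundant. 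For the convergence of $p$, the paper introduces a second Lyapunov functional $G=\int_0^{2\pi}\widetilde{p}^{\,2}\,d\theta$ with $\widetilde{p}=p-a_1\cos\theta-b_1\sin\theta$, proves $G'\le0$, bounds $|G''|$ via parabolic regularity for $p$, and concludes $G'\to0$; you instead read the convergence of $p$ off that of $\beta$ through the Fourier relation $\beta=p+p''$, using that the operator $p\mapsto p+p''$ annihilates exactly the first harmonics (which are frozen by the conserved Steiner point) and that $|1-k^2|\ge3$ for $k\ge2$ converts decay of the Fourier coefficients of $\beta-\frac{L}{2\pi}$ into decay of those of $p$. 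Both proofs use the conserved Steiner point to center the limit circle and the frozen area to fix the radius; your route avoids the separate regularity estimates for $\widetilde{p}$ and the $G''$ bound, at the modest cost of having to justify $A(t)\to\pi a_0(\infty)^2$, which your Parseval step does supply since
\begin{equation*}
\Bigl|\sum_{k\ge2}(1-k^2)\bigl(a_k^2+b_k^2\bigr)\Bigr|\le\sum_{k\ge2}(1-k^2)^2\bigl(a_k^2+b_k^2\bigr)=\frac1\pi\Bigl\|\beta-\tfrac{L}{2\pi}\Bigr\|_{L^2}^2\longrightarrow0 .
\end{equation*}
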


\begin{proof}
{\noindent \bf Step 1.} $\beta\rightarrow \sqrt{\frac{A_0}{\pi}}$ as $t\rightarrow \infty$

By \eqref{eqn2.9} and integration by parts, one has
\begin{align*}
\frac{d^{2}F }{dt^{2} }
&=-4\int_{0}^{2\pi }\beta _{\theta }\beta _{t\theta }  d\theta  \\
&=-4\int_{0}^{2\pi } \beta _{\theta } \left ( \beta _{\theta \theta }+\beta -\lambda \left ( t \right )   \right )_{\theta}d\theta  \\
&=4\int_{0}^{2\pi }  \beta _{\theta \theta }^2 d\theta-4\int_{0}^{2\pi } \beta _{\theta}^2 d\theta.
\end{align*}
Together with \eqref{gdds} and the Wirtinger inequality, it has
\begin{align*}
\frac{d^{2}F }{dt^{2} }
&\ge 12 \int_{0}^{2\pi } \beta _{\theta }^2 d\theta=-6\frac{dF }{dt }.
\end{align*}
This deduces that
\begin{align*}
    \frac{dF }{dt }(\theta,t)\ge \frac{dF }{dt }(\theta,0)\ e^{-6t},
\end{align*}
which combining with $\frac{dF}{dt}\le 0$ 
yields
$\frac{dF }{dt }\rightarrow0$ as time $t\rightarrow\infty$, Thus, $\beta_\theta\rightarrow 0$ and $\beta$ tends to a constant $r$ as time $t$ goes to infinite.

Combining with \eqref{L}, \eqref{A} and the fact that $\beta\rightarrow r$
as $t\rightarrow \infty$, it yields $r=\sqrt{\frac{A_0}{\pi}}$.

{\noindent \bf Step 2.}  The limiting curve is a circle

Let $p(\theta,t)$ be the support function of 
the evolving curve $\gamma(\theta,t)$.
Set $$a_1(t)=\frac{1}{\pi}\int_0^{2\pi}p(\theta,t)\cos\theta d\theta\quad \text{and}\quad b_1(t)=\frac{1}{\pi}\int_0^{2\pi}p(\theta,t)\sin\theta d\theta.$$
By \eqref{eqn2.6} and integration by parts, one has
\begin{align*}
   \frac{da_1(t)}{dt}&= \frac{1}{\pi}\int_0^{2\pi}p_t\cos\theta d\theta\\
   &=\frac{1}{\pi}\int_0^{2\pi}(p_{\theta \theta}+p-\lambda(t))\cos\theta d\theta\\
   &=-\frac{1}{\pi}\int_0^{2\pi}p\cos\theta d\theta+\frac{1}{\pi}\int_0^{2\pi}p\cos\theta d\theta=0.
\end{align*}
In exactly the same way, $\frac{db_1(t)}{dt}=0$. These results
deduce that $a_1(t)$ and $b_1(t)$ are constants 
independent of time, that is, point $(a_1(t),b_1(t))$ is fixed under flow \eqref{jh2}, denoted by $(a_1,b_1)$. 

Suppose that $\widetilde{p}(\theta,t)=p(\theta,t)-a_{1}\cos \theta -b_{1}  \sin \theta$, then 
\begin{align}\label{tj}
\int_{0}^{2\pi } \widetilde{p}(\theta,t)\cos\theta d\theta=\int_{0}^{2\pi } \widetilde{p}(\theta,t)\sin\theta d\theta=0.  
\end{align}
Again by \eqref{eqn2.6}, one has
\begin{align}\label{fcbs}
\widetilde{p}_{t}={p}_{t}&={p}_{\theta \theta }+p-\lambda \left ( t \right )
=\widetilde{p}_{\theta \theta }+ \widetilde{p}-\lambda(t).
\end{align}

Consider the quantity $G=\int_{0}^{2\pi } \widetilde{p} ^{2} d\theta$. It follows from \eqref{fcbs}, integration
by parts and the fact $\beta=\widetilde{\beta}$ that
\begin{align*}
\frac{dG}{dt}&=2\int_{0}^{2\pi } \widetilde{p} \widetilde{p} _{t}d\theta  \\
&=2\int_{0}^{2\pi } \widetilde{p}\left ( \widetilde{p} _{\theta \theta }+\widetilde{p} - \lambda \left ( t \right )  \right ) d\theta \\
&=-2\int_{0}^{2\pi }\widetilde{p} _{\theta }^{2} d\theta  +2\int_{0}^{2\pi }\widetilde{p}^{2} d\theta-2\int_{0}^{2\pi } \beta ^{2} d\theta \\
&=-2\int_{0}^{2\pi }\widetilde{p} _{\theta }^{2} d\theta  +2\int_{0}^{2\pi }\widetilde{p}^{2} d\theta-2\int_{0}^{2\pi } \left ( \widetilde{p}+\widetilde{p}_{\theta \theta } \right )   ^{2} d\theta \\
&=-2\int_{0}^{2\pi }\widetilde{p} _{\theta }^{2} d\theta  +2\int_{0}^{2\pi }\widetilde{p}^{2} d\theta-2\int_{0}^{2\pi }\widetilde{p}^{2} d\theta-2\int_{0}^{2\pi }\widetilde{p} _{\theta\theta }^{2} d\theta+4\int_{0}^{2\pi }\widetilde{p} _{\theta}^2 d\theta \\
&=-2\int_{0}^{2\pi }\widetilde{p} _{\theta\theta }^{2} d\theta+2\int_{0}^{2\pi }\widetilde{p}_{\theta }^{2} d\theta.  
\end{align*}
Together with the Wirtinger inequality, this yields $\frac{dG}{dt}\le 0$.

Let $\widetilde{p}^{(i)}$ be the $i$-th derivative of $\widetilde{p}$.
Next, we show that $\widetilde{p}^{(i)}$ has uniform bounds. 
Indeed, equation \eqref{fcbs} is uniformly parabolic
since $\lambda(t)$ is uniformly bounded.
By the maximum principle for uniformly parabolic equations, $|\widetilde{p}(\theta,t)|\le C_0$ holds for a
constant $C_0$ only depending on $\gamma_0$. From the standard regularity theory for uniformly parabolic
equations (see Krylov \cite{K1987}), it yields
$|\widetilde{p}^{(i)}|\le C_i$
holds for $i\ge 1$, where 
$C_i$ is a constant independent of time.

One can compute 
\begin{align*}
\frac{d^2G}{dt^2}=4\int_0^{2\pi}\widetilde{p}_{\theta\theta\theta}^2d\theta  -8\int_0^{2\pi}\widetilde{p}_{\theta\theta}^2d\theta+4\int_0^{2\pi}\widetilde{p}_{\theta}^2d\theta. 
\end{align*}
Since all the terms of $\frac{d^2G}{dt^2}$ have bounds independent of time,
$$\left|\frac{d^2G}{dt^2}\right|\le M$$
holds for a constant $M$ independent of time $t$.

Noticing that $\frac{dG}{dt}$ is nonpositive, the quantity
$$\int_0^{\infty} \frac{dG}{dt} dt=G(\infty)-G(0)$$
has a uniformly lower bound. 
Then it follows that 
$$\lim_{t\rightarrow \infty}\frac{dG}{dt}=0.$$
Hence, again by \eqref{L} and \eqref{A}, one
can get
$\lim\limits_{t\rightarrow \infty}\widetilde{p}=\sqrt{\frac{A_0}{\pi}}$,
that is,  the limiting curve is a circle
centered at $(a_1,b_1)$ with radius $\sqrt{\frac{A_0}{\pi}}$.
\end{proof}

\begin{remark}
The asymptotic behavior of $\beta$ for
the area-preserving flow \eqref{jh2}
can be also achieved by the quantity $\frac{dL}{dt}$. In fact, the boundedness for high order derivatives of $\beta$ can deduce $\frac{d^2L}{dt^2}$ is uniformly bounded. Together
with the fact $\frac{dL}{dt}\le0$
and $$\int_0^{\infty} \frac{dL}{dt} dt=L(\infty)-L(0)$$
has a uniformly lower bound, it yields
$\lim\limits_{t\rightarrow \infty}\frac{dL}{dt}=0$.
This implies that $\beta$ tends to a constant as time $t$ goes to infinity due to the Cauchy-Schwarz inequality.
\end{remark}

\begin{lemma}\label{lem4.3}
Under flow \eqref{jh2}, $|\beta^{(i)}|$ exponentially decays for $i\ge 1$.    
\end{lemma}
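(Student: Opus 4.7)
The plan is to iterate the argument used in Lemma \ref{lem2.4} on the successive $\theta$-derivatives of $\beta$. Since $\lambda(t)$ depends only on $t$, differentiating the evolution equation \eqref{eqn2.9} (with $f=\beta-\lambda(t)$) $i$ times in $\theta$ yields the clean linear equation
\begin{equation*}
\partial_t \beta^{(i)} = \beta^{(i+2)} + \beta^{(i)}, \qquad i \ge 1,
\end{equation*}
with no inhomogeneous forcing remaining. Multiplying by $\beta^{(i)}$ and integrating over $\mathbb{S}^1$, the boundary terms disappear by periodicity and one gets
\begin{equation*}
\frac{d}{dt}\int_0^{2\pi}(\beta^{(i)})^2\, d\theta = -2\int_0^{2\pi}(\beta^{(i+1)})^2\, d\theta + 2\int_0^{2\pi}(\beta^{(i)})^2\, d\theta.
\end{equation*}

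Next, I would verify that $\beta^{(i)}$ has vanishing zeroth and first Fourier modes. Periodicity gives $\int_0^{2\pi}\beta^{(i)}\,d\theta = 0$ for $i\ge 1$, while Corollary \ref{lem-gd} yields $\int_0^{2\pi}\beta^{(i)}\cos\theta\, d\theta = \int_0^{2\pi}\beta^{(i)}\sin\theta\, d\theta = 0$. Therefore the Fourier expansion of $\beta^{(i)}$ begins at frequency $2$, and the sharpened Wirtinger-type inequality
\begin{equation*}
\int_0^{2\pi}(\beta^{(i+1)})^2\, d\theta \ge 4\int_0^{2\pi}(\beta^{(i)})^2\, d\theta
\end{equation*}
applies (exactly as in the proof of Lemma \ref{lem2.4}). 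Feeding this into the identity above gives
\begin{equation*}
\frac{d}{dt}\int_0^{2\pi}(\beta^{(i)})^2\, d\theta \le -6\int_0^{2\pi}(\beta^{(i)})^2\, d\theta,
\end{equation*}
and Gronwall produces $\|\beta^{(i)}(\cdot,t)\|_{L^2}^2 \le \|\beta^{(i)}(\cdot,0)\|_{L^2}^2\, e^{-6t}$, where the right-hand side is finite because $\gamma_0$ is smooth.

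Finally, I would upgrade the $L^2$ decay to pointwise decay via the one-dimensional Sobolev embedding used at the end of Lemma \ref{lem2.4} (see \cite[p.~90]{G-H1986}), namely $\|\beta^{(i)}\|_{L^\infty}^2 \le C\bigl(\|\beta^{(i)}\|_{L^2}^2 + \|\beta^{(i+1)}\|_{L^2}^2\bigr)$. Both summands on the right decay like $e^{-6t}$ by the previous step applied at orders $i$ and $i+1$, so $|\beta^{(i)}| \le \Lambda_i e^{-3t}$ for a constant $\Lambda_i$ depending only on $\gamma_0$. I do not expect a serious obstacle here: the entire argument is the derivative-level analogue of Lemma \ref{lem2.4}, and the only subtle point is the verification that all derivatives inherit the two orthogonality conditions against $\cos\theta$ and $\sin\theta$ that raise the Wirtinger constant from $1$ to $4$; this is precisely what Corollary \ref{lem-gd} provides.
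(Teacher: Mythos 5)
Your proposal is correct and follows essentially the same route as the paper: differentiate \eqref{eqn2.9} $i$ times, integrate by parts to get $\frac{d}{dt}\int_0^{2\pi}(\beta^{(i)})^2d\theta=-2\int_0^{2\pi}(\beta^{(i+1)})^2d\theta+2\int_0^{2\pi}(\beta^{(i)})^2d\theta$, apply the sharpened Wirtinger inequality with constant $4$ (justified by periodicity and Corollary \ref{lem-gd}), integrate the resulting differential inequality, and upgrade to pointwise decay via the Sobolev inequality of \cite[p.~90]{G-H1986}. No gaps.
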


\begin{proof}
It follows from \eqref{eqn2.9} and integration by parts that 
\begin{align*}
\frac{d }{dt} \int_{0}^{2\pi } \left ( \beta ^{ \left ( i \right )}   \right )  ^{2} d\theta 
=&2\int_{0}^{2\pi }\beta ^{\left ( i \right ) } \beta _{t}^{\left ( i \right ) }   d\theta \\
=&2\int_{0}^{2\pi } \beta ^{\left ( i \right ) } \left ( \beta _{\theta \theta }+\beta -\lambda \left ( t \right )   \right )^{ \left ( i \right ) } d\theta  \\
=&2\int_{0}^{2\pi } \beta  ^{\left ( i \right ) }\beta  ^{\left ( i+2 \right ) } d\theta +2\int_{0}^{2\pi } \left(\beta  ^{(i)} \right) ^{2} d\theta\\
=&-2\int_{0}^{2\pi } \left(\beta  ^{(i+1)} \right) ^{2} d\theta +2\int_{0}^{2\pi }  \left(\beta  ^{(i)} \right) ^{2} d\theta.
\end{align*}
Together with the inequality
\begin{align*}
\int_{0}^{2\pi } \left(\beta  ^{(i+1)} \right) ^{2} d\theta\ge 4\int_{0}^{2\pi } \left(\beta  ^{(i)} \right) ^{2} d\theta    \qquad \text{for $i\ge 1$ }
\end{align*}
derived from the Wirtinger inequality and \eqref{gdds}, it yields
\begin{align*}
\frac{d }{dt} \int_{0}^{2\pi } \left ( \beta ^{ \left ( i \right )}   \right )  ^{2} d\theta\le -6\int_{0}^{2\pi } \left(\beta  ^{(i)} \right) ^{2} d\theta.
\end{align*}
This implies that
\begin{align*}
    \int_{0}^{2\pi } \left(\beta  ^{(i)}(\theta,t) \right) ^{2} d\theta\le \int_{0}^{2\pi } \left(\beta  ^{(i)}(\theta,0) \right) ^{2} d\theta\cdot e^{-6t}. 
\end{align*}
Thus, by the Sobolev inequality appearing in \cite[p.90]{G-H1986}, $|\beta^{(i)}|$ exponentially decays for $i\ge 1$. 
\end{proof}

In order to show that the evolving curve $\gamma(\cdot,t)$ cannot escape to infinity,
we need to prove the next necessary lemma. 

\begin{lemma}\label{lem-sdx}
Under flow \eqref{jh2}, one can get
\begin{align*}
    \left|\frac{L^2}{2\pi}-\int_0^{2\pi}\beta^2 d\theta\right|\le \Lambda_2 e^{-6t},
\end{align*}
where $\Lambda_2$ is a constant only depending on $\gamma_0$.
\end{lemma}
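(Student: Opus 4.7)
The plan is to recognize that the quantity under consideration is, up to a sign, the $L^2$-variance of $\beta$ about its mean $L/(2\pi)$, and then invoke the exponential decay already established in Lemma \ref{lem2.4}.

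First I would observe the purely algebraic identity
\begin{equation*}
\int_0^{2\pi}\beta^2\,d\theta-\frac{L^2}{2\pi}=\int_0^{2\pi}\left(\beta-\frac{L}{2\pi}\right)^2d\theta,
\end{equation*}
which follows by expanding the square on the right and using $\int_0^{2\pi}\beta\,d\theta=L$ (this in turn comes from \eqref{L} and \eqref{ql}). In particular the left-hand side is non-negative by the Cauchy--Schwarz inequality, so the absolute value in the desired estimate can simply be removed.

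Next I would check that Lemma \ref{lem2.4} is applicable in the present setting. Under flow \eqref{jh2}, Proposition \ref{pro4.1} shows that the nonlocal term $\lambda(t)=\frac{1}{L}\int_0^{2\pi}\beta^2\,d\theta$ is uniformly bounded, and Lemma \ref{lem2.2} ensures the evolving curve remains $\ell$-convex on the entire time interval $[0,+\infty)$ provided by Lemma \ref{lem2.3}. Thus the hypotheses of Lemma \ref{lem2.4} are satisfied.

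Finally, inspecting the proof of Lemma \ref{lem2.4}, one actually obtains the sharper integral inequality
\begin{equation*}
\int_0^{2\pi}\left(\beta-\frac{L}{2\pi}\right)^2d\theta\le \int_0^{2\pi}\left(\beta_0-\frac{L_0}{2\pi}\right)^2d\theta\cdot e^{-6t},
\end{equation*}
which combined with the identity above yields the claim with $\Lambda_2:=\int_0^{2\pi}\bigl(\beta_0-L_0/(2\pi)\bigr)^2d\theta$, a constant depending only on $\gamma_0$. There is essentially no obstacle here: the whole content of the lemma is the variance identity, since the exponential decay has already been proven. The only point requiring a small amount of care is confirming the applicability of Lemma \ref{lem2.4}, i.e. the uniform boundedness of $\lambda(t)$ under flow \eqref{jh2}, which is exactly Proposition \ref{pro4.1}.
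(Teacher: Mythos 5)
Your proposal is correct, and it takes a genuinely different (and cleaner) route than the paper. The paper works with $Q=\frac{L^2}{2\pi}-\int_0^{2\pi}\beta^2\,d\theta$ directly: it first proves $Q\le 0$ by invoking the Green--Osher inequality with $F(x)=x^2$ together with the isoperimetric inequality \eqref{in-0}, then computes $\frac{dQ}{dt}=2Q+2\int_0^{2\pi}\beta_\theta^2\,d\theta$ and uses \eqref{gjgc} to obtain the differential inequality $\frac{dQ}{dt}\ge -6Q$, from which $|Q|\le|Q(0)|e^{-6t}$ follows. Your observation that
\begin{equation*}
\int_0^{2\pi}\beta^2\,d\theta-\frac{L^2}{2\pi}=\int_0^{2\pi}\Bigl(\beta-\frac{L}{2\pi}\Bigr)^2d\theta
\end{equation*}
(valid since $\int_0^{2\pi}\beta\,d\theta=L$) makes both steps immediate: the sign of $Q$ is trivial, with no need for Green--Osher, and the decay rate is exactly the integral estimate already derived inside the proof of Lemma \ref{lem2.4} (or, if you prefer to cite only the statement of that lemma, squaring the pointwise bound $|\beta-\frac{L}{2\pi}|\le\Lambda_1 e^{-3t}$ and integrating gives $\Lambda_2=2\pi\Lambda_1^2$). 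Your check that Lemma \ref{lem2.4} applies under flow \eqref{jh2} -- via Lemma \ref{lem2.2}, Lemma \ref{lem2.3} and Proposition \ref{pro4.1} -- is the right due diligence. In effect the paper's computation of $\frac{dQ}{dt}$ secretly reproves your identity term by term; your version exposes it up front and reduces the lemma to a corollary of earlier work, at the cost of leaning on an estimate stated only within another proof rather than as a standalone result.
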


\begin{proof}
Consider the quantity $Q=\frac{L^2}{2\pi}-\int_0^{2\pi}\beta^2 d\theta$. We first
claim that $Q\le 0$. Taking $F(x)=x^2$ 
in the Green-Osher inequality for $\ell$-convex
Legendre curves (see \cite[(1.10)]{L-W2023}),
it has
\begin{align*}
    \int_0^{2\pi}\beta^2d\theta \ge \frac{L^2-2\pi A}{\pi}.
\end{align*}
From the isoperimetric inequality
for $\ell$-convex Legendre curves, it yields 
\begin{align*}
    Q\le \frac{L^2}{2\pi}-\frac{L^2-2\pi A}{\pi}\le -\frac{L^2-4\pi A}{2\pi}\le 0.
\end{align*}
By \eqref{eqn2.9}, \eqref{L}, \eqref{ql} and \eqref{eqn2.7}, one can compute
\begin{align*}
    \frac{dQ}{dt}&=\frac{L}{\pi}L_t-2\int_0^{2\pi}\beta\beta_t d\theta\\
    &=\frac{L}{\pi}L_t-2\int_0^{2\pi}\beta(\beta_{\theta\theta}+\beta-\lambda(t)) d\theta\\
    &=\frac{L}{\pi}\int_0^{2\pi}(\beta-\lambda(t))d\theta+2\int_0^{2\pi}\beta_{\theta}^2 d\theta-2\int_0^{2\pi}\beta^2 d\theta+2\lambda(t)\int_0^{2\pi}\beta d\theta\\
    &=\frac{L}{\pi}(L-2\pi \lambda(t))+2\int_0^{2\pi}\beta_{\theta}^2 d\theta-2\int_0^{2\pi}\beta^2 d\theta+2L\lambda(t)\\
    &=2\left(\frac{L^2}{2\pi}-\int_0^{2\pi}\beta^2 d\theta\right)+2\int_0^{2\pi}\beta_{\theta}^2 d\theta\\
    &=2Q+2\int_0^{2\pi}\beta_{\theta}^2 d\theta.
\end{align*}
Since inequality \eqref{gjgc} holds,
this can be written as
\begin{align*}
   \frac{dQ}{dt}&\ge 2Q+8 \int_0^{2\pi}\left(\beta-\frac{L}{2\pi}\right)^2 d\theta\\
   &=2Q+8\int_0^{2\pi}\beta^2 d\theta-\frac{8L}{\pi}\int_0^{2\pi}\beta d\theta+\frac{4L^2}{\pi}\\
   &=2Q+8\int_0^{2\pi}\beta^2 d\theta-\frac{4L^2}{\pi}\\
   &=2Q-8Q=-6Q,
\end{align*}
which together with $Q\le 0$ deduces that
\begin{align*}
    \frac{dQ^2}{dt}=2Q \frac{dQ}{dt}\le-12Q^2. 
\end{align*}
This gives
$|Q|\le |Q(0)|e^{-6t}$,
the desired result is concluded.
\end{proof}

\begin{proposition}\label{jh2-sl}
Under flow \eqref{jh2}, the evolving curve converges to a limiting
circle as time goes to infinity.    
\end{proposition}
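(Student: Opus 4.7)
The plan is to upgrade the asymptotics established in Proposition~\ref{pro4.2} to genuine $C^\infty$-convergence of the whole curve $\gamma(\theta,t)$, and in particular to confirm that the curve stays in a bounded region of $\mathbb{R}^2$. Since the Steiner coordinates $a_1,b_1$ are preserved by the flow and $\beta\to\sqrt{A_0/\pi}$ at least in a pointwise sense, the only possible limit is the circle of radius $\sqrt{A_0/\pi}$ centered at $(a_1,b_1)$, so the task is to control the quantity $p(\theta,t)-\sqrt{A_0/\pi}-a_1\cos\theta-b_1\sin\theta$ in each $C^k$-norm uniformly as $t\to\infty$.

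First I would extract an exponential rate for $L(t)$. Combining Lemma~\ref{jx-2} (so that $L(t)$ is non-increasing) with the isoperimetric inequality~\eqref{in-0} (so that $L(t)\ge 2\sqrt{\pi A_0}$) and Lemma~\ref{lem-sdx} (which bounds $L^2/(2\pi)-\int_0^{2\pi}\beta^2\,d\theta$ by $\Lambda_2 e^{-6t}$), a short chain of inequalities using $\int_0^{2\pi}\beta^2\,d\theta\ge L^2/(2\pi)$ from Cauchy--Schwarz forces $L(t)$ to converge to $2\sqrt{\pi A_0}$ exponentially. Plugging this rate back into Lemma~\ref{lem2.4} then yields
\begin{equation*}
    \left|\beta(\theta,t)-\sqrt{\tfrac{A_0}{\pi}}\right|\le \Lambda_3 e^{-3t},
\end{equation*}
and Lemma~\ref{lem4.3} supplies the analogous exponential decay for every $\beta^{(i)}$ with $i\ge 1$.

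Next, using the identity $\widetilde p+\widetilde p''=\beta$ together with the orthogonality relations $\int_0^{2\pi}\widetilde p\cos\theta\,d\theta=\int_0^{2\pi}\widetilde p\sin\theta\,d\theta=0$ from the proof of Proposition~\ref{pro4.2} and the time-independence of the Steiner point, I would invert $\mathrm{Id}+\partial_\theta^2$ on Fourier modes $k\ne\pm 1$. The zero mode of $\widetilde p$ equals $L(t)/(2\pi)$ and therefore tends to $\sqrt{A_0/\pi}$ exponentially, while the higher modes are reconstructed from those of $\beta-L/(2\pi)$ via the Fourier multipliers $1/(1-k^2)$; hence $\widetilde p\to\sqrt{A_0/\pi}$ exponentially in every $C^k$-norm. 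By~\eqref{qx} this upgrades to $C^\infty$-convergence of $\gamma(\theta,t)$ to the circle of radius $\sqrt{A_0/\pi}$ centered at $(a_1,b_1)$, proving the proposition.

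The main obstacle is this bridge from decay of $\beta$ to decay of $\widetilde p$, because $\mathrm{Id}+\partial_\theta^2$ has the two-dimensional kernel $\mathrm{span}\{\cos\theta,\sin\theta\}$ on which inversion is impossible. The cancellation of precisely these two modes is achieved by subtracting $a_1\cos\theta+b_1\sin\theta$ from $p$, which is legitimate only because these coefficients are conserved by the flow. This is the structural reason why the Steiner point emerges naturally as the centre of the limit circle and why the evolving curve cannot drift to infinity along a zero-eigenvalue direction.
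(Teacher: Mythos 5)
Your argument is correct, but it reaches the conclusion by a genuinely different route than the paper. The paper's proof never touches the support function at this stage: it splits the normal speed as $\beta-\lambda(t)=\bigl(\beta-\tfrac{L}{2\pi}\bigr)+\bigl(\tfrac{L}{2\pi}-\tfrac{1}{L}\int_0^{2\pi}\beta^2\,d\theta\bigr)$, invokes Lemma \ref{lem2.4}, Lemma \ref{lem4.3} and Lemma \ref{lem-sdx} (together with $L\ge 2\sqrt{\pi A_0}$) to get $\bigl|\tfrac{\partial\gamma}{\partial t}\bigr|\le\Lambda e^{-\alpha t}$, and then integrates in time to rule out escape to infinity and oscillation, identifying the limit via the Blaschke selection theorem and Proposition \ref{pro4.2}. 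You instead transfer the exponential decay of $\beta-\tfrac{L}{2\pi}$ and of $\beta^{(i)}$ to the support function by inverting $\mathrm{Id}+\partial_\theta^2$ on the Fourier modes $|k|\ne 1$, after removing the kernel modes through the conserved Steiner coordinates; this yields exponential $C^\infty$ convergence of $\widetilde p$ to the constant $\sqrt{A_0/\pi}$ and hence of $\gamma$ to the explicit limit circle, with no compactness argument needed. Your route buys a sharper statement (explicit center, explicit rate in every $C^k$ norm), while the paper's velocity-integration argument is the more standard flow technique and reuses Proposition \ref{pro4.2} rather than re-deriving the convergence of $\widetilde p$. The one step you should tighten is the exponential convergence of $L(t)$ to $2\sqrt{\pi A_0}$: monotonicity, the lower bound $L\ge 2\sqrt{\pi A_0}$, and the decay of $Q=\tfrac{L^2}{2\pi}-\int_0^{2\pi}\beta^2\,d\theta$ do not by themselves pin down the limiting value of $L$; you need either the identity $\tfrac{dL}{dt}=\tfrac{2\pi}{L}Q$ from the proof of Lemma \ref{jx-2} (which, integrated from $t$ to $\infty$, gives $|L(t)-L_\infty|\le Ce^{-6t}$) combined with the identification $L_\infty=2\sqrt{\pi A_0}$ from Step 1 of Proposition \ref{pro4.2}, or else the Parseval expression \eqref{A1} for the fixed area applied to the already-decaying higher modes of $\widetilde p$. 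With that made explicit, the proof is complete.
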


\begin{proof}
From the isoperimetric inequality for $\ell$-convex Legendre curves, it yields 
\begin{align*}
\left|\beta-\frac{1}{L}\int_0^{2\pi}\beta^2 d\theta\right|=&\left|\beta-\frac{L}{2\pi}+\frac{L}{2\pi}-\frac{1}{L}\int_0^{2\pi}\beta^2 d\theta\right|\\
\le& \left|\beta-\frac{L}{2\pi}\right|+\left|\frac{L}{2\pi}-\frac{1}{L}\int_0^{2\pi}\beta^2 d\theta\right|\\
=&\left|\beta-\frac{L}{2\pi}\right|+\frac{1}{L}\left|\frac{L^2}{2\pi}-\int_0^{2\pi}\beta^2 d\theta\right|\\
\le &\left|\beta-\frac{L}{2\pi}\right|+\frac{1}{
\sqrt{4\pi A_0}}\left|\frac{L^2}{2\pi}-\int_0^{2\pi}\beta^2 d\theta\right|.
\end{align*}
Using Lemma \ref{lem2.4}, Lemma \ref{lem4.3} and Lemma \ref{lem-sdx}, there 
exists positive constants $\Lambda$ and $\alpha$
such that
\begin{align}\label{qxgj}
    \left|\frac{\partial \gamma}{\partial t}\right|\le \Lambda e^{-\alpha t}.
\end{align}
This implies that
the evolving curve $\gamma(\cdot,t)$ cannot escape to infinity.  Due to the
Blaschke selection theorem, there exists a subsequence $\{t_i\}$ such that $\gamma(\cdot,t_i)$ converges
to a circle with radius $\sqrt{\frac{A_0}{\pi}} $ denoted by $\gamma_{\infty}$ as $t_i\rightarrow\infty$.  
Integrating the first equation in \eqref{jh2}, we get
\begin{align*}
    \gamma_\infty=\int_0^{\infty}\frac{\partial \gamma}{\partial t}dt+\gamma_0&\le \int_0^{\infty}\left|\frac{\partial \gamma}{\partial t}\right|dt+\gamma_0\\
    &\le \frac{\Lambda}{\alpha}+\gamma_0.
\end{align*}
This tells us the limit curve $\gamma_{\infty}$ will not escape to infinity as time $t\rightarrow\infty$.

Suppose that $t_i>t$, then by \eqref{qxgj}, it has
\begin{align*}
    |\gamma(\cdot,t)-\gamma(\cdot,t_i)|\le\int_t^{t_i}\left|\frac{\partial \gamma}{\partial \tau}\right|d\tau\le \frac{\Lambda}{\alpha}(e^{-\alpha t}-e^{-\alpha t_i}).
\end{align*}
This deduces that
\begin{align*}
    |\gamma(\cdot,t)-\gamma_\infty|\le \frac{\Lambda}{\alpha}e^{-\alpha t}
\end{align*}
as time $t_i\rightarrow \infty$, which shows that the
evolving curve cannot oscillate indefinitely and converges to a limit circle.
\end{proof}


{\bf \noindent Proof of Theorem \ref{thm1.1}}\quad
Since adding the tangential vector field for flow \eqref{model} when $\lambda(t)=\frac{1}{L}\int_0^{2\pi}\beta^2 d\theta$ does not affect the geometric shape of the evolving curve, flow \eqref{model} can be equivalently reduced to \eqref{jh2}. Proposition \ref{pro4.1} shows that the term $\lambda(t)$ is uniformly bounded. Lemma \ref{lem2.2} and Lemma \ref{lem2.3} implies that an $\ell$-convex Legendre curve evolving according to \eqref{jh2} remains so and the long time
existence for this flow is obtained.
Proposition \ref{jh2-sl} deduces that the evolving curve $\gamma(\cdot,t)$ cannot escape to infinity. Meanwhile, Proposition \ref{pro4.2}
implies that the evolving curve converges to a circle of radius $\sqrt{\frac{A_0}{\pi}}$. Finally, Lemma \ref{lem4.3} 
says that the flow \eqref{jh2} converges
in the $C^{\infty}$ sense as time $t$ goes to
infinity.\qed

\section{The length-preserving flow}\label{sec3}

Choosing
$\lambda(t)=\frac{L}{2\pi}$ in \eqref{model} that is equivalent to letting $f=\beta-\frac{L}{2\pi}$ in \eqref{jh}, the
flow \eqref{model} becomes the following equivalent evolution problem
\begin{equation}\label{jh1}
    \begin{cases}
        \frac{\partial \gamma}{\partial t}=(\beta-\frac{L}{2\pi})'\mu+(\beta-\frac{L}{2\pi}) \nu, \\
        \frac{\partial \nu}{\partial t}=0.
    \end{cases}
\end{equation}

\begin{lemma}\label{jx-1}
Under flow \eqref{jh1}, the algebraic length
of $\gamma(\theta,t)$ keeps fixed and the associated algebraic area is increasing.
\end{lemma}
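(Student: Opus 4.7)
The plan is to apply Lemma \ref{lem2.1} directly with $f=\beta-\frac{L}{2\pi}$ and reduce each assertion to a one-line integral identity.

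For the length, I would first observe from \eqref{L} together with $\beta=p+p''$ (i.e. \eqref{ql}) that $\int_0^{2\pi}\beta\,d\theta=\int_0^{2\pi}(p+p'')d\theta=L$, since the $p''$ term contributes nothing by periodicity. Substituting into \eqref{eqn2.7} gives
\[
\frac{dL}{dt}=\int_0^{2\pi}\Bigl(\beta-\frac{L}{2\pi}\Bigr)d\theta=L-L=0,
\]
so $L(t)\equiv L_0$ is preserved.

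For the area, I would plug $f=\beta-\frac{L}{2\pi}$ into \eqref{eqn2.8} to obtain
\[
\frac{dA}{dt}=\int_0^{2\pi}\beta\Bigl(\beta-\frac{L}{2\pi}\Bigr)d\theta=\int_0^{2\pi}\beta^{2}\,d\theta-\frac{L}{2\pi}\int_0^{2\pi}\beta\,d\theta=\int_0^{2\pi}\beta^{2}\,d\theta-\frac{L^{2}}{2\pi},
\]
using the identity $\int_0^{2\pi}\beta\,d\theta=L$ once more. Then the Cauchy--Schwarz inequality applied to $\beta\cdot 1$ on $[0,2\pi]$ gives $L^{2}=\bigl(\int_0^{2\pi}\beta\,d\theta\bigr)^{2}\le 2\pi\int_0^{2\pi}\beta^{2}\,d\theta$, whence $\frac{dA}{dt}\ge 0$.

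There is no real obstacle here: the entire argument consists of using the evolution formulas already recorded in Lemma \ref{lem2.1}, the identity $L=\int\beta\,d\theta$ that follows immediately from \eqref{L} and \eqref{ql}, and Cauchy--Schwarz. The only thing worth flagging is that the tangential components added in passing from \eqref{model} to \eqref{jh} (and hence to \eqref{jh1}) do not affect $L$ or $A$, so Lemma \ref{lem2.1} is indeed the right tool to invoke.
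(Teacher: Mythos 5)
Your argument is correct and is essentially identical to the paper's proof: both apply Lemma \ref{lem2.1} with $f=\beta-\frac{L}{2\pi}$, use $\int_0^{2\pi}\beta\,d\theta=L$ from \eqref{L} and \eqref{ql} to get $\frac{dL}{dt}=0$ and $\frac{dA}{dt}=\int_0^{2\pi}\beta^2\,d\theta-\frac{L^2}{2\pi}$, and conclude $\frac{dA}{dt}\ge 0$ by Cauchy--Schwarz. Your closing remark about tangential terms not affecting $L$ or $A$ is a sensible extra check, consistent with the paper's reduction of \eqref{model} to \eqref{jh1}.
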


\begin{proof}
The algebraic length
of $\gamma(\theta,t)$ is fixed derived from \eqref{eqn2.7}. 
By \eqref{eqn2.8}, \eqref{L} and \eqref{ql}, it yields that
\begin{align}\label{At-yy}
    \frac{dA}{dt}=\int_0^{2\pi} \beta^2 d\theta-\frac{L^2}{2\pi},
\end{align}
Together with the Cauchy-Schwarz inequality,
it yields $\frac{dA}{dt}\ge 0$, which deduces that
the algebraic area of $\gamma(\theta,t)$ is increasing.
\end{proof}

\begin{proposition}\label{pro3.2}
Suppose that the initial $\ell$-convex Legendre curve is of algebraic length $L_0$. Under flow \eqref{jh1}, 
the evolving curve converges to a circle with radius $\frac{L_0}{2\pi}$. Specially, if $L_0=0$, then the limiting circle becomes a point.
\end{proposition}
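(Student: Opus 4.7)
The plan is to mirror the argument of Proposition \ref{pro4.2}, exploiting a crucial simplification: by Lemma \ref{jx-1}, $L(t) \equiv L_0$ along the flow, so the nonlocal term $\lambda(t) \equiv L_0/(2\pi)$ is a genuine constant (not merely bounded). Consequently, equation \eqref{eqn2.9} is automatically uniformly parabolic, so Lemmas \ref{lem2.2} and \ref{lem2.3} immediately deliver long-time existence together with preservation of $\ell$-convexity, and Lemma \ref{lem2.4} furnishes the exponential decay $|\beta - L_0/(2\pi)| \le \Lambda_1 e^{-3t}$. Thus the first half of the desired conclusion---that $\beta$ converges exponentially to the constant $L_0/(2\pi)$---is already in hand.

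Next I would identify the limit position. Using $p_t = \beta - L_0/(2\pi)$ from \eqref{eqn2.6}, together with Lemma \ref{lem2.2}, a direct computation shows
\begin{equation*}
\frac{d}{dt}\int_0^{2\pi} p\cos\theta\,d\theta = \int_0^{2\pi}\bigl(\beta - L_0/(2\pi)\bigr)\cos\theta\,d\theta = 0,
\end{equation*}
and likewise for $\sin\theta$, so the Steiner point $(a_1,b_1)$ is time-independent. I would then set $\tilde{p}(\theta,t) = p(\theta,t) - a_1\cos\theta - b_1\sin\theta$. Since $a_1\cos\theta + b_1\sin\theta$ lies in the kernel of $u + u''$, we have $\tilde{p}_t = \tilde{p}_{\theta\theta} + \tilde{p} - L_0/(2\pi)$, and $\tilde{p} - L_0/(2\pi)$ has zero $0$-th, $\cos\theta$, and $\sin\theta$ Fourier modes. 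Considering $H(t) = \int_0^{2\pi}(\tilde{p} - L_0/(2\pi))^2\,d\theta$, integration by parts gives
\begin{equation*}
\frac{dH}{dt} = -2\int_0^{2\pi}\tilde{p}_\theta^2\,d\theta + 2\int_0^{2\pi}\bigl(\tilde{p} - L_0/(2\pi)\bigr)^2\,d\theta,
\end{equation*}
and the Wirtinger inequality on functions with only Fourier modes $k\ge 2$ yields $\int\tilde{p}_\theta^2\,d\theta \ge 4H$, so $\frac{dH}{dt}\le -6H$ and $H$ decays exponentially. Since $\lambda$ is constant, Krylov's regularity theory applied to the parabolic equation for $\tilde{p}$ gives uniform bounds on $\tilde{p}^{(i)}$ for all $i\ge 1$; interpolating the exponential $L^2$-decay of $\tilde{p} - L_0/(2\pi)$ against these bounds promotes it to $C^\infty$-convergence $\tilde{p} \to L_0/(2\pi)$.

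Finally, to rule out escape to infinity and conclude convergence of the curves themselves, I would argue as in Proposition \ref{jh2-sl}: the exponential decay of $\beta - L_0/(2\pi)$ and its derivatives translates, via \eqref{jh1}, into an estimate $|\partial\gamma/\partial t|\le \Lambda e^{-\alpha t}$, which is integrable in $t$ and therefore forces $\gamma(\cdot,t)$ to converge to a definite limit $\gamma_\infty$ with support function $L_0/(2\pi) + a_1\cos\theta + b_1\sin\theta$---a circle of radius $L_0/(2\pi)$ centered at $(a_1,b_1)$. When $L_0 = 0$, substituting $p_\infty(\theta) = a_1\cos\theta + b_1\sin\theta$ into \eqref{qx} collapses the parametrization to the single point $(a_1,b_1)$, giving the special case. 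The main subtlety I expect is the $C^\infty$ convergence step in the $L_0=0$ case, since the isoperimetric anchor $L\ge 2\sqrt{\pi A_0}$ used in Proposition \ref{jh2-sl} is unavailable; however, because $\lambda(t)$ here is literally constant, uniform parabolicity and hence the Krylov bounds hold unconditionally, and the argument goes through uniformly in $L_0$.
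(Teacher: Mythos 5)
Your proposal is correct and follows essentially the same route as the paper: cite Lemmas \ref{lem2.2}--\ref{lem2.4} for $\ell$-convexity, long-time existence and the exponential decay of $\beta-\frac{L_0}{2\pi}$ (made trivial here by $\lambda(t)\equiv\frac{L_0}{2\pi}$), show the Steiner point is fixed, subtract off the $\cos\theta,\sin\theta$ modes, and run the Wirtinger/Gronwall argument on $\int_0^{2\pi}\bigl(\overline{p}-\frac{L_0}{2\pi}\bigr)^2 d\theta$ to get decay at rate $e^{-6t}$. The extra material on Krylov bounds, $C^\infty$ convergence and non-escape to infinity is what the paper defers to Lemma \ref{lem3.3} and Proposition \ref{jh2-s2}, and your observation that the $L_0=0$ case causes no trouble because $\lambda$ is literally constant matches the paper's reasoning.
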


\begin{proof}
{\bf Step 1.}    $\beta\rightarrow \frac{L_0}{2\pi}$ as $t\rightarrow \infty$

Since flow \eqref{jh1} is length-preserving, by Lemma \ref{lem2.3} and Lemma \ref{lem2.4}, $\beta\rightarrow \frac{L_0}{2\pi}$ as $t\rightarrow \infty$.

{\noindent \bf Step 2.}  The limiting curve is a circle

Let $p(\theta,t)$ be the support function of 
the evolving curve $\gamma(\theta,t)$.
Set $$a_1(t)=\frac{1}{\pi}\int_0^{2\pi}p(\theta,t)\cos\theta d\theta\quad \text{and}\quad b_1(t)=\frac{1}{\pi}\int_0^{2\pi}p(\theta,t)\sin\theta d\theta.$$
A simple computation as in Proposition \ref{pro4.2} tells us $a_1(t)$ and $b_1(t)$ are constants 
independent of time, denoted by $(a_1,b_1)$.

Assume that $\overline{p}(\theta,t)=p(\theta,t)-a_{1}\cos \theta -b_{1}  \sin \theta$, then $\overline{\beta}=\beta$ and
\begin{align*}
  \overline{p}_t= \overline{p}_{\theta\theta}+\overline{p}-\frac{L}{2\pi}.
\end{align*}
Noticing that 
\begin{align*}
    \int_0^{2\pi}\left(\overline{p}-\frac{L}{2\pi}\right)d\theta=
    \int_0^{2\pi}\left(\overline{p}-\frac{L}{2\pi}\right)\cos\theta d\theta=
    \int_0^{2\pi}\left(\overline{p}-\frac{L}{2\pi}\right)\sin \theta d\theta=0,
\end{align*}
one has
\begin{align*}
    \int_0^{2\pi}\left(\overline{p}-\frac{L}{2\pi}\right)_{\theta}^2d\theta\ge4 \int_0^{2\pi}\left(\overline{p}-\frac{L}{2\pi}\right)^2d\theta.
\end{align*}
Then,
\begin{align*}
\frac{d}{ dt}\int_{0}^{2\pi } \left(\overline{p}-\frac{L}{2\pi}\right)^{2} d\theta
&=2\int_{0}^{2\pi } \left(\overline{p}-\frac{L}{2\pi}\right)\overline{p}_{t}d\theta  \\
&=2\int_{0}^{2\pi } \left(\overline{p}-\frac{L}{2\pi}\right)\left(\overline{p}_{\theta\theta}+\overline{p}-\frac{L}{2\pi}\right) d\theta \\
&=2\int_{0}^{2\pi }\left(\overline{p}-\frac{L}{2\pi}\right)^2 d\theta  -2\int_{0}^{2\pi }\left(\overline{p}-\frac{L}{2\pi}\right)_{\theta}^2 d\theta\\
&\le -6\int_{0}^{2\pi }\left(\overline{p}-\frac{L}{2\pi}\right)^2 d\theta,
\end{align*}
which implies that $\overline{p}\rightarrow\frac{L_0}{2\pi}$
as $t\rightarrow \infty$, that is, the limiting curve is a circle of radius $\frac{L_0}{2\pi}$ centered at $(a_1,b_1)$.

Specially, if $L_0=0$, then the limit circle turns into a point $(a_1,b_1)$ that is circle of radius zero.
\end{proof}

By a similar proof as in Lemma \ref{lem4.3}, one can get 

\begin{lemma}\label{lem3.3}
Under flow \eqref{jh1}, $|\beta^{(i)}|$ exponentially decays for $i\ge 1$.    
\end{lemma}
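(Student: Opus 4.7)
The plan is to mimic the proof of Lemma \ref{lem4.3} essentially verbatim, exploiting three facts specific to flow \eqref{jh1}: the nonlocal term $\lambda(t)=\frac{L}{2\pi}=\frac{L_0}{2\pi}$ is in fact a constant (since $L$ is preserved by Lemma \ref{jx-1}), so it is automatically uniformly bounded; the evolution equation \eqref{eqn2.9} reads $\beta_t=\beta_{\theta\theta}+\beta-\frac{L_0}{2\pi}$; and Corollary \ref{lem-gd} gives $\int_0^{2\pi}\beta^{(i)}\cos\theta\, d\theta=\int_0^{2\pi}\beta^{(i)}\sin\theta\, d\theta=0$ for every $i\ge 1$, independently of which nonlocal $\lambda(t)$ is used, so the Wirtinger inequality yields the improved constant $4$:
\begin{equation*}
\int_0^{2\pi}\bigl(\beta^{(i+1)}\bigr)^2 d\theta\ge 4\int_0^{2\pi}\bigl(\beta^{(i)}\bigr)^2 d\theta,\qquad i\ge 1.
\end{equation*}

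Concretely, the first step is to fix $i\ge 1$ and compute $\frac{d}{dt}\int_0^{2\pi}(\beta^{(i)})^2 d\theta$. Differentiating \eqref{eqn2.9} in $\theta$ a total of $i$ times kills the constant $\frac{L_0}{2\pi}$, leaving $\beta_t^{(i)}=\beta^{(i+2)}+\beta^{(i)}$. Multiplying by $\beta^{(i)}$ and integrating by parts on the circle, the calculation is identical to the one in Lemma \ref{lem4.3} and gives
\begin{equation*}
\frac{d}{dt}\int_0^{2\pi}\bigl(\beta^{(i)}\bigr)^2 d\theta=-2\int_0^{2\pi}\bigl(\beta^{(i+1)}\bigr)^2 d\theta+2\int_0^{2\pi}\bigl(\beta^{(i)}\bigr)^2 d\theta.
\end{equation*}

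The second step is to combine this with the improved Wirtinger inequality above to produce the differential inequality
\begin{equation*}
\frac{d}{dt}\int_0^{2\pi}\bigl(\beta^{(i)}\bigr)^2 d\theta\le -6\int_0^{2\pi}\bigl(\beta^{(i)}\bigr)^2 d\theta,
\end{equation*}
from which Gr\"onwall's inequality immediately yields
\begin{equation*}
\int_0^{2\pi}\bigl(\beta^{(i)}(\theta,t)\bigr)^2 d\theta\le \int_0^{2\pi}\bigl(\beta^{(i)}(\theta,0)\bigr)^2 d\theta\cdot e^{-6t}.
\end{equation*}
Finally, to upgrade this $L^2$ decay to the desired pointwise decay of $|\beta^{(i)}|$, apply the Sobolev inequality from \cite[p.~90]{G-H1986} to $\beta^{(i)}$, controlling $\|\beta^{(i)}\|_{L^\infty}$ by $\|\beta^{(i)}\|_{L^2}$ and $\|\beta^{(i+1)}\|_{L^2}$, both of which decay like $e^{-3t}$ by the estimate just proved (applied at levels $i$ and $i+1$).

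I do not anticipate any real obstacle: the argument is a routine adaptation of Lemma \ref{lem4.3}. The only subtlety worth flagging is that, in contrast to the area-preserving case where one needed to check boundedness of $\lambda(t)$ via Proposition \ref{pro4.1}, here $\lambda(t)\equiv\frac{L_0}{2\pi}$ so no such argument is needed; moreover because $\lambda$ is constant in $\theta$ it contributes nothing after differentiation, which is precisely why Corollary \ref{lem-gd} can be invoked uniformly in $i$.
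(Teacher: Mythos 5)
Your proposal is correct and is exactly the paper's intended argument: the paper itself proves Lemma \ref{lem3.3} by simply remarking that the proof of Lemma \ref{lem4.3} carries over, and your adaptation (noting that $\lambda(t)\equiv\frac{L_0}{2\pi}$ is constant and disappears upon differentiation in $\theta$, then invoking Corollary \ref{lem-gd}, the Wirtinger inequality with constant $4$, and the Sobolev inequality from \cite[p.~90]{G-H1986}) is precisely that carry-over. No gaps.
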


Combining Lemma \ref{lem2.3} and Lemma \ref{lem3.3}, we have

\begin{proposition}\label{jh2-s2}
Under flow \eqref{jh1}, the evolving curve converges to a limiting
circle as time goes to infinity.    
\end{proposition}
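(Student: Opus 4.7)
The plan is to mirror the argument already carried out for the area-preserving case in Proposition \ref{jh2-sl}; the length-preserving case is in fact simpler because the nonlocal term $\lambda(t)=\frac{L}{2\pi}$ reduces to the constant $\frac{L_0}{2\pi}$ once we invoke length preservation (Lemma \ref{jx-1}). The three ingredients already on the table are: long-time existence (Lemma \ref{lem2.3}, which applies since $\lambda(t)\equiv \frac{L_0}{2\pi}$ is uniformly bounded), exponential decay of $|\beta-\frac{L}{2\pi}|$ (Lemma \ref{lem2.4}), and exponential decay of every higher derivative $|\beta^{(i)}|$ (Lemma \ref{lem3.3}). Proposition \ref{pro3.2} additionally identifies any limit as a circle of radius $\frac{L_0}{2\pi}$ (or a point if $L_0=0$) centered at the fixed Steiner point.

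First, I would estimate the velocity of the flow. Reading off \eqref{jh1} and using $|\mu|=|\nu|=1$, one has
\[
\left|\frac{\partial \gamma}{\partial t}\right| \;\le\; \left|\beta-\frac{L_0}{2\pi}\right| + |\beta_\theta|.
\]
Lemma \ref{lem2.4} controls the first summand by $\Lambda_1 e^{-3t}$, and Lemma \ref{lem3.3} applied with $i=1$ controls the second by a constant times $e^{-3t}$ as well. Combining, there exist constants $\Lambda,\alpha>0$ depending only on $\gamma_0$ such that $|\partial \gamma/\partial t|\le \Lambda e^{-\alpha t}$.

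Next, I would integrate this speed bound in time. For every $\theta\in\mathbb{S}^1$, $|\gamma(\theta,t)-\gamma_0(\theta)|\le \int_0^t \Lambda e^{-\alpha\tau}d\tau \le \Lambda/\alpha$, so the family $\{\gamma(\cdot,t)\}_{t\ge 0}$ stays inside a bounded region of $\mathbb{R}^2$ and cannot escape to infinity. By the Blaschke selection theorem, any sequence $t_i\to\infty$ admits a subsequence along which $\gamma(\cdot,t_i)$ converges; Proposition \ref{pro3.2} identifies every such subsequential limit with the same circle $\gamma_\infty$ of radius $\frac{L_0}{2\pi}$ centered at $(a_1,b_1)$ (degenerating to that point when $L_0=0$).

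Finally, to upgrade subsequential convergence to genuine convergence, I would invoke the Cauchy criterion afforded by the velocity bound: for $s>t$,
\[
|\gamma(\theta,t)-\gamma(\theta,s)| \;\le\; \int_t^s \left|\frac{\partial \gamma}{\partial \tau}\right|d\tau \;\le\; \frac{\Lambda}{\alpha}\bigl(e^{-\alpha t}-e^{-\alpha s}\bigr),
\]
so $\{\gamma(\cdot,t)\}$ is uniformly Cauchy in $t$, and letting $s\to\infty$ along the convergent subsequence yields $|\gamma(\theta,t)-\gamma_\infty(\theta)|\le \frac{\Lambda}{\alpha}e^{-\alpha t}$. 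Lemma \ref{lem3.3} then promotes this $C^0$ convergence to $C^\infty$. There is no serious obstacle; the only care needed is to check that the hypotheses of Lemma \ref{lem2.4} and Lemma \ref{lem3.3} (uniform boundedness of $\lambda(t)$ and persistence of $\ell$-convexity) are satisfied throughout flow \eqref{jh1}, which follows immediately from length preservation and Lemma \ref{lem2.2}.
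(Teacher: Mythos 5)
Your proposal is correct and follows essentially the same route as the paper, which simply states that the proof is almost identical to that of Proposition \ref{jh2-sl} and omits the details. You have correctly filled in those details, including the accurate observation that the length-preserving case is simpler because $\lambda(t)=\frac{L_0}{2\pi}$ is constant, so no analogue of Lemma \ref{lem-sdx} is needed to control the speed.
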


\begin{proof}
Since the proof is alomst the same as Proposition \ref{jh2-sl}, we omit the detailed reasoning procession.     
\end{proof}

{\bf \noindent Proof of Theorem \ref{thm1.2}}\quad
Since adding the tangential vector field for flow \eqref{model} when $\lambda(t)=\frac{2\pi}{L}$ does not affect the geometric shape of the evolving curve, flow \eqref{model} can be equivalently reduced to \eqref{jh1}. 
Since flow \eqref{jh1} is length-preserving,
Lemma \ref{lem2.2} and Lemma \ref{lem2.3} implies that an $\ell$-convex Legendre curve evolving according to \eqref{jh1} remains so and the long time
existence for this flow is achieved.
Proposition \ref{jh2-s2} deduces that the evolving curve $\gamma(\cdot,t)$ cannot escape to infinity. Then, Proposition \ref{pro3.2}
implies that the evolving curve converges to a circle of radius $\frac{L_0}{2\pi}$ and the limit circle becomes a point when $L_0=0$. At last, Lemma \ref{lem3.3} 
says that the flow \eqref{jh1} converges
in the $C^{\infty}$ sense as time $t$ goes to
infinity.
\qed

\section{Some applications}\label{sec5}

In this section, we obtain some geometric 
inequalities for
$\ell$-convex Legendre curves via the length-preserving inverse curvature flow \eqref{jh1}.

First, we show the isoperimetric inequality
for $\ell$-convex curves.

\begin{theorem}\label{thm5.0}
Let  $\gamma$ be an $\ell$-convex curve with algebraic length $L$ and algebraic area $A$,
then   
\begin{equation}\label{eqn5.0}
 L^2- 4\pi A \ge 0
\end{equation}
with equality if and only if $\gamma$
is a circle. 
\end{theorem}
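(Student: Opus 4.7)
The plan is to read off the isoperimetric inequality as a monotonicity statement under the length-preserving inverse curvature flow \eqref{jh1}, whose long-time behavior has already been settled in Theorem \ref{thm1.2}. By Lemma \ref{jx-1}, along the flow the algebraic length $L$ is constant while the algebraic area $A$ is nondecreasing. Consequently, the isoperimetric deficit
\[
D(t) := L(t)^2 - 4\pi A(t) = L_0^2 - 4\pi A(t)
\]
is nonincreasing in $t$. Theorem \ref{thm1.2} says that the evolving curve converges smoothly to a circle of radius $\frac{L_0}{2\pi}$, whose algebraic area is $\pi\bigl(\tfrac{L_0}{2\pi}\bigr)^2 = \tfrac{L_0^2}{4\pi}$, so $D(\infty) = 0$. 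Combining these two facts,
\[
L_0^2 - 4\pi A_0 = D(0) \ge D(\infty) = 0,
\]
which is exactly \eqref{eqn5.0}. The degenerate case $L_0 = 0$ fits in the same framework, as the limiting circle collapses to a point of zero area.

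For the equality case, I would argue as follows. Assume $L_0^2 - 4\pi A_0 = 0$. Since $D(t)$ is nonincreasing, nonnegative (by the inequality just established at each time), and tends to $0$, the assumption $D(0) = 0$ forces $D(t) \equiv 0$, hence $\tfrac{dA}{dt}(0) = 0$. By the length-area identity \eqref{At-yy},
\[
\frac{dA}{dt}\bigg|_{t=0} = \int_0^{2\pi}\beta(\theta,0)^2\,d\theta - \frac{L_0^2}{2\pi},
\]
and the Cauchy–Schwarz inequality $\bigl(\int_0^{2\pi}\beta\,d\theta\bigr)^2 \le 2\pi\int_0^{2\pi}\beta^2\,d\theta$ is therefore attained at $t=0$. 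Equality in Cauchy–Schwarz forces $\beta(\cdot,0)$ to be a constant $r$, and solving $p + p'' = r$ gives $p_0(\theta) = r + a_1\cos\theta + b_1\sin\theta$. A short computation from \eqref{qx} identifies this as the support function of a genuine circle of radius $r$ centered at $(a_1,b_1)$, giving the equality characterization.

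The only step that demands some caution is verifying that the hypotheses of Theorem \ref{thm1.2} are actually available for every $\ell$-convex Legendre curve, including the limiting regimes $L_0 = 0$ and those with negative algebraic area, since no positivity assumption on $A_0$ is being placed in Theorem \ref{thm5.0}. Fortunately Theorem \ref{thm1.2} was proved without any such restriction, so the monotonicity–convergence pipeline applies uniformly, and the remaining rigidity analysis reduces to the Cauchy–Schwarz equality case as above.
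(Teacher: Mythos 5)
Your proposal is correct and follows essentially the same route as the paper: both exploit that under the length-preserving flow \eqref{jh1} the deficit $L^2-4\pi A$ is nonincreasing (strictly, unless the curve is a circle, by Cauchy--Schwarz applied to \eqref{At-yy}) and tends to $0$ by the convergence result of Theorem \ref{thm1.2}. Your added care about non-circularity (Theorem \ref{thm1.2} not relying on the isoperimetric inequality) and the explicit rigidity analysis via the Cauchy--Schwarz equality case are exactly the details the paper's terser proof leaves implicit.
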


\begin{proof}
Under flow \eqref{jh1}, since it is 
length-preserving, from Lemma \ref{jx-1} and the Cauchy-Schwarz inequality,
the quantity $U=L^2-4\pi A$ is strict decreasing
unless the evolving curve is a circle. 
Thus, $U(t)\ge U(\infty)=0$ and equality holds
if and only if the evolving curve is a circle.
This deduces the desired result.
\end{proof}

Next, we show the generalizations
for inequalities \eqref{in-1}, \eqref{in-2}
and \eqref{in-3}.

\begin{proposition}\label{pro5.1}
Let  $\gamma$ be an $\ell$-convex curve of algebraic length $L$ and algebraic area $A$,
then
\begin{equation}\label{eqn5.1}
    \int_0^{2\pi }\beta^2(\theta)d\theta\ge 2A+\tau\left(\frac{L^2}{4\pi}- A\right)
\end{equation}
holds for any $\tau\le 8$, 
and the equality holds if $\gamma$ is a circle.
Moreover, for any $\tau<8$, if the equality in \eqref{eqn5.1} holds, then $\gamma$ is a circle,
and for $\tau=8$, if the equality holds,
then the support function of $\gamma$ is of the form $p(\theta)=a_0+a_1\cos\theta+b_1\sin\theta+a_2\cos 2\theta+b_2\sin2\theta$.
\end{proposition}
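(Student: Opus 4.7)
The plan is to introduce the deficit functional
$$V(t) = \int_0^{2\pi}\beta^2(\theta,t)\,d\theta - 2A(t) - \tau\left(\frac{L(t)^2}{4\pi} - A(t)\right)$$
along the length-preserving flow \eqref{jh1} and show that $V$ is non-increasing in $t$ for every $\tau \le 8$. Since Theorem \ref{thm1.2} guarantees that any $\ell$-convex Legendre initial curve flows smoothly to a circle of radius $L_0/(2\pi)$, substituting the limiting values $\beta_\infty \equiv L_0/(2\pi)$, $L_\infty = L_0$, $A_\infty = L_0^2/(4\pi)$ gives $V(\infty) = 0$; monotonicity then forces $V(0) \ge V(\infty) = 0$, which is exactly the inequality \eqref{eqn5.1}.

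To establish the monotonicity, I will first differentiate $V$. Using $\frac{dL}{dt} = 0$, the formula $\frac{dA}{dt} = \int_0^{2\pi}\beta^2\,d\theta - \frac{L^2}{2\pi}$ from \eqref{At-yy}, and the evolution equation $\beta_t = \beta_{\theta\theta} + \beta - \frac{L}{2\pi}$ derived from \eqref{eqn2.9}, an integration by parts collapses the cross terms to
$$\frac{dV}{dt} = -2\int_0^{2\pi}\beta_\theta^2\,d\theta + \tau\int_0^{2\pi}\left(\beta - \frac{L}{2\pi}\right)^2 d\theta.$$
Corollary \ref{lem-gd} together with the identity $\int_0^{2\pi}(\beta - L/(2\pi))\,d\theta = 0$ shows that the Fourier expansion of $\beta - L/(2\pi)$ starts at frequency $k = 2$, so the Wirtinger-type estimate \eqref{gjgc} applies and yields $\int_0^{2\pi}\beta_\theta^2\,d\theta \ge 4\int_0^{2\pi}(\beta - L/(2\pi))^2\,d\theta$. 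Substituting produces
$$\frac{dV}{dt} \le (\tau - 8)\int_0^{2\pi}\left(\beta - \frac{L}{2\pi}\right)^2 d\theta \le 0 \qquad \text{for every } \tau \le 8,$$
which is the desired monotonicity.

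For the equality analysis, a direct check shows that any circle satisfies $V \equiv 0$. Conversely, assume equality in \eqref{eqn5.1} at $t = 0$. Then $V(0) = V(\infty) = 0$, so monotonicity forces $\frac{dV}{dt} \equiv 0$ on $[0,\infty)$. When $\tau < 8$ the coefficient $\tau - 8$ is strictly negative, so $\beta \equiv L/(2\pi)$ at $t = 0$; solving $p_0 + p_0'' \equiv L_0/(2\pi)$ gives $p_0(\theta) = L_0/(2\pi) + a_1\cos\theta + b_1\sin\theta$, i.e. $\gamma_0$ is a circle. When $\tau = 8$ the identity $\int\beta_\theta^2\,d\theta = 4\int(\beta - L/(2\pi))^2\,d\theta$ at $t = 0$ is the equality case of \eqref{gjgc}, which occurs precisely when $\beta_0 - L_0/(2\pi) = c_2\cos 2\theta + d_2\sin 2\theta$ for some constants; re-integrating via $\beta = p + p''$ yields the claimed five-term Fourier expansion for $p_0$.

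The only mildly delicate point is the $\tau = 8$ case of the equality discussion, where one must pin down the equality case of the Wirtinger inequality and translate it back through the relation $\beta = p + p''$ to read off the shape of $p$. Otherwise the argument is essentially bookkeeping with the evolution equations already derived, so I do not anticipate a serious obstacle.
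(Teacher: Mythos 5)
Your proposal is correct and follows essentially the same route as the paper: you monotonize the same deficit functional along the length-preserving flow \eqref{jh1}, obtain $\frac{dV}{dt}\le(\tau-8)\int_0^{2\pi}(\beta-\frac{L}{2\pi})^2d\theta$ via \eqref{gjgc}, and conclude from $V(\infty)=0$, with the same equality analysis (noting $A_t=\int_0^{2\pi}(\beta-\frac{L}{2\pi})^2d\theta$ is just the paper's $\int_0^{2\pi}\beta^2d\theta-\frac{L^2}{2\pi}$ rewritten). No substantive differences.
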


\begin{proof}
Consider the quantity $W=\int_0^{2\pi}\beta^2 d\theta-2A-\tau\left(\frac{L^2}{4\pi}- A\right)$. Compute that
\begin{align*}
    \frac{dW}{dt}=2\int_0^{2\pi}\beta\beta_t d\theta+(\tau-2)A_t.
\end{align*}
From \eqref{eqn2.12}, integration by parts and 
\eqref{At-yy}, the above expression
can be rewritten as
\begin{align*}
    \frac{dW}{dt}=-2\int_0^{2\pi}\beta_{\theta}^2 d\theta+\tau A_t.
\end{align*}
Together with \eqref{gjgc}, \eqref{L} and \eqref{At-yy}, this yields
\begin{align*}
    \frac{dW}{dt}\le(\tau-8)A_t=(\tau-8)\left(\int_0^{2\pi}\beta^2 d\theta-\frac{L^2}{2\pi}\right).
\end{align*}

When $\tau<8$, due to the Cauchy-Schwarz inequality, the quantity $W$ is strict decreasing unless the evolving curve is a circle.  Thus,
$W(t)\ge W(\infty)=0$, which deduces inequality \eqref{eqn5.1} and with equality holds if and only if the evolving curve is a circle.

Note that the inequality \eqref{gjgc} is strict unless the support function of evolving curve
is of form $p(\theta,t)=\frac{L}{2\pi}+a_1(t)\cos\theta+b_1(t)\sin\theta+a_2(t)\cos 2\theta+b_2(t)\sin2\theta$.
This implies that the quantity $W$ is strict decreasing in the case $\tau=8$. Hence,
$W(t)\ge W(\infty)=0$, that is, \eqref{eqn5.1}
holds and with equality if and only if the support function of evolving curve
is of form $p(\theta,t)=\frac{L}{2\pi}+a_1(t)\cos\theta+b_1(t)\sin\theta+a_2(t)\cos 2\theta+b_2(t)\sin2\theta$.
\end{proof}

By \eqref{eqn5.1} and \eqref{L1}, we have
\begin{corollary}\label{cor5.2}
Let  $\gamma$ be an $\ell$-convex curve with algebraic length $L=0$ and algebraic area $A$,
then
\begin{equation}\label{eqn5.2cr}
    \int_0^{2\pi }\beta^2(\theta)d\theta +\tau A\ge 0
\end{equation}
holds for any $\tau\le 6$, 
and the equality holds if $\gamma$ is a point.
Moreover, for any $\tau<6$, if the equality in \eqref{eqn5.2cr} holds, then $\gamma$ is a point,
and for $\tau=6$, if the equality  holds,
then the support function of $\gamma$ is of the form $p(\theta)=a_1\cos\theta+b_1\sin\theta+a_2\cos 2\theta+b_2\sin2\theta$.
\end{corollary}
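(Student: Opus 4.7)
The plan is to derive Corollary \ref{cor5.2} as a direct specialization of Proposition \ref{pro5.1} to the case $L=0$, using the Fourier characterization \eqref{L1}. More concretely, I would start from the inequality in \eqref{eqn5.1}, namely
\begin{equation*}
\int_0^{2\pi}\beta^2(\theta)\,d\theta \ge 2A + \tau'\!\left(\frac{L^2}{4\pi}-A\right),
\end{equation*}
valid for every $\tau'\le 8$, and set $L=0$ so that the right-hand side collapses to $(2-\tau')A$. Writing $\tau:=\tau'-2$, this becomes
\begin{equation*}
\int_0^{2\pi}\beta^2(\theta)\,d\theta + \tau A \ge 0,
\end{equation*}
and the constraint $\tau'\le 8$ translates exactly to $\tau\le 6$, which is the desired inequality.

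For the equality discussion, I would split along the same dichotomy used in Proposition \ref{pro5.1}. When $\tau<6$ (equivalently $\tau'<8$), Proposition \ref{pro5.1} says equality forces $\gamma$ to be a circle; but by \eqref{L1} the hypothesis $L=0$ is equivalent to $a_0=0$, and a circle has support function of the form $a_0+a_1\cos\theta+b_1\sin\theta$, so $a_0=0$ pins the radius to zero and $\gamma$ degenerates to the single point $(a_1,b_1)$, as claimed. When $\tau=6$ (i.e.\ $\tau'=8$), Proposition \ref{pro5.1} characterizes the extremal case by a support function of the form $p(\theta)=a_0+a_1\cos\theta+b_1\sin\theta+a_2\cos 2\theta+b_2\sin 2\theta$; imposing $L=0$, hence $a_0=0$, this reduces exactly to $p(\theta)=a_1\cos\theta+b_1\sin\theta+a_2\cos 2\theta+b_2\sin 2\theta$, matching the stated conclusion.

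There is essentially no analytic obstacle here, because all the heavy lifting (running the length-preserving flow \eqref{jh1} to the limit, controlling the sign of $\frac{dW}{dt}$, and tracing the two separate equality cases back through the Wirtinger-type inequality \eqref{gjgc}) has already been carried out in Proposition \ref{pro5.1}. The only thing to be careful about is a clean indexing change $\tau\leftrightarrow\tau'-2$ and the observation via \eqref{L1} that the assumption $L=0$ is the same as $a_0=0$, so the circle of Proposition \ref{pro5.1} automatically shrinks to a point in the present context. For that reason, I would present the corollary with a short two-line proof that quotes \eqref{eqn5.1} and \eqref{L1} and then records the two equality cases by direct substitution, rather than reopening the flow argument.
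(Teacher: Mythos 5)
Your proposal is correct and matches the paper's intent exactly: the paper derives this corollary with the one-line remark ``By \eqref{eqn5.1} and \eqref{L1}'', i.e., precisely the specialization $L=0$ (so $a_0=0$) of Proposition \ref{pro5.1} with the reindexing $\tau=\tau'-2$ turning $\tau'\le 8$ into $\tau\le 6$. Your handling of the two equality cases (circle of radius $a_0=0$ degenerating to a point for $\tau<6$, and dropping the $a_0$ term from the extremal support function for $\tau=6$) is the same argument, just written out.
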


\begin{theorem}\label{thm-j1}
If $\gamma$ is an $\ell$-convex curve, then 
\begin{equation}\label{in-j1}
    \int_0^{2\pi} \beta_\theta^2 d\theta\ge 
        \xi\left(\frac{L^2}{4\pi}-A\right)
\end{equation}
holds for $\xi\le 24$,
and the equality holds if $\gamma$ is a circle.
Moreover, for any $\xi<24$, if the equality in \eqref{in-j1} holds, then $\gamma$ is a circle,
and for $\xi=24$, 
if the equality in \eqref{in-j1} holds
if and only if $\gamma$ is of support function
$$p(\theta)=a_0+a_1\cos\theta+b_1\sin\theta+a_2\cos2\theta+b_2\sin2\theta.$$
\end{theorem}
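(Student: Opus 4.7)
The plan is to run the length-preserving inverse curvature flow \eqref{jh1} and use the monotone-quantity method already employed in Theorem \ref{thm5.0} and Proposition \ref{pro5.1}. Define
$$V(t)=\int_0^{2\pi}\beta_\theta^{2}\,d\theta-\xi\left(\frac{L^{2}}{4\pi}-A\right),$$
where $\beta$, $L$, $A$ refer to the curve evolving from the given initial $\ell$-convex curve $\gamma$. Since the flow \eqref{jh1} preserves $L$, differentiating $L^2/(4\pi)$ contributes nothing, and Lemma \ref{jx-1} together with \eqref{At-yy} gives $A_t=\int_0^{2\pi}\beta^2\,d\theta-L^2/(2\pi)\ge 0$. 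Using \eqref{eqn2.12} with $f=\beta-L/(2\pi)$ and integrating by parts,
$$\frac{d}{dt}\int_0^{2\pi}\beta_\theta^{2}\,d\theta=-2\int_0^{2\pi}\beta_{\theta\theta}\,\beta_t\,d\theta=-2\int_0^{2\pi}\beta_{\theta\theta}^{2}\,d\theta+2\int_0^{2\pi}\beta_\theta^{2}\,d\theta,$$
so that
$$\frac{dV}{dt}=-2\int_0^{2\pi}\beta_{\theta\theta}^{2}\,d\theta+2\int_0^{2\pi}\beta_\theta^{2}\,d\theta+\xi\left(\int_0^{2\pi}\beta^{2}\,d\theta-\frac{L^{2}}{2\pi}\right).$$

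The main step is to show $dV/dt\le 0$ precisely when $\xi\le 24$, which is where the sharp constant enters. By Lemma \ref{lem2.2} and Corollary \ref{lem-gd}, the evolving $\beta$ admits a Fourier expansion $\beta(\theta,t)=a_0(t)+\sum_{k\ge 2}(a_k(t)\cos k\theta+b_k(t)\sin k\theta)$ with the $k=1$ modes vanishing (and $L=2\pi a_0$). Parseval's identity then turns $dV/dt$ into
$$\frac{dV}{dt}=\pi\sum_{k\ge 2}\bigl(-2k^{4}+2k^{2}+\xi\bigr)(a_k^{2}+b_k^{2}),$$
since $\int \beta^2-L^2/(2\pi)=\pi\sum_{k\ge 2}(a_k^2+b_k^2)$. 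The coefficient $-2k^{4}+2k^{2}+\xi$ is nonpositive for every $k\ge 2$ iff $\xi\le 2k^{2}(k^{2}-1)$ for every $k\ge 2$, and the minimum over $k\ge 2$ is attained at $k=2$ with value $24$. Hence $dV/dt\le 0$ exactly when $\xi\le 24$. This Fourier computation is really just a weighted Wirtinger inequality and can be rephrased as $\int\beta_{\theta\theta}^{2}\ge 4\int\beta_\theta^{2}$ with the additional sharp bound $\int\beta_{\theta\theta}^{2}-\int\beta_\theta^{2}\ge 12\int (\beta-L/(2\pi))^{2}$.

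With monotonicity established, Theorem \ref{thm1.2} (or more directly Proposition \ref{pro3.2} and Lemma \ref{lem3.3}) tells us that the evolving curve converges smoothly to a circle of radius $L_0/(2\pi)$, on which both $\int\beta_\theta^{2}$ and $L^{2}/(4\pi)-A$ vanish. Thus $V(\infty)=0$, and since $V$ is nonincreasing, $V(0)\ge 0$, which is the claimed inequality \eqref{in-j1} for the original curve $\gamma$.

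For the equality cases, when $\xi<24$ every coefficient $-2k^{4}+2k^{2}+\xi$ is strictly negative for $k\ge 2$, so $dV/dt<0$ unless $a_k=b_k=0$ for all $k\ge 2$; hence $V(0)=0$ forces $\gamma$ to be a circle. When $\xi=24$ the coefficient for $k=2$ is zero while those for $k\ge 3$ remain strictly negative, so $dV/dt\equiv 0$ along the flow iff only the Fourier modes $k=0,2$ of $\beta$ are present throughout, which by \eqref{ql} corresponds precisely to the support function $p(\theta)=a_0+a_1\cos\theta+b_1\sin\theta+a_2\cos 2\theta+b_2\sin 2\theta$. The principal obstacle is the sharp-constant bookkeeping at $\xi=24$: one has to verify that the $k=2$ mode can be preserved by the flow (which is immediate from the linear Fourier form of \eqref{eqn2.9}) so that the equality class is exactly as stated, rather than only allowing circles.
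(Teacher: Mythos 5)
Your proposal is correct and follows essentially the same route as the paper: run the length-preserving flow \eqref{jh1}, show the quantity $V=\int_0^{2\pi}\beta_\theta^2\,d\theta-\xi\bigl(\frac{L^2}{4\pi}-A\bigr)$ is nonincreasing for $\xi\le 24$, and conclude from $V(\infty)=0$. Your Parseval computation is just the combined form of the paper's two Wirtinger-type inequalities \eqref{gjgc} and \eqref{gjgc-1} (and it also cleanly exposes the typo in the paper's displayed $\frac{dV}{dt}$, where the coefficient of $\int_0^{2\pi}\beta_\theta^2\,d\theta$ should be $2$, as you have it), with the same identification of the equality classes.
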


\begin{proof}
Consider the quantity $V=\int_0^{2\pi}\beta_\theta^2 d\theta-\xi\left(\frac{L^2}{4\pi}- A\right)$.
It follows from \eqref{eqn2.12}, integration by parts and 
\eqref{At-yy} that
\begin{align*}
    \frac{dV}{dt}=-2\int_0^{2\pi}\beta_{\theta\theta}^2 d\theta+\int_0^{2\pi}\beta_{\theta}^2 d\theta+\xi A_t.
\end{align*}
Due to \eqref{gdds},
\begin{align*}
    &\int_0^{2\pi}\beta_\theta d\theta=0,\\
    &\int_0^{2\pi}\beta_\theta \cos\theta d\theta
    =\int_0^{2\pi}\beta_\theta \sin \theta d\theta
    =0,
\end{align*}
we have
\begin{align}\label{gjgc-1}
    \int_0^{2\pi}\beta_{\theta\theta}^2 d\theta\ge4 \int_0^{2\pi}\beta_{\theta}^2 d\theta.
\end{align}
Combine \eqref{gjgc}, \eqref{L} and \eqref{At-yy}, it yields
\begin{align*}
    \frac{dV}{dt}\le(\xi-24)A_t=(\xi-24)\left(\int_0^{2\pi}\beta^2 d\theta-\frac{L^2}{2\pi}\right).
\end{align*}

For the case $\xi<24$, from the Cauchy-Schwarz inequality, the quantity $V$ is strict decreasing unless the evolving curve is a circle.  Hence,
$V(t)\ge V(\infty)=0$, which deduces \eqref{in-j1}
holds and with equality if and only if the evolving curve is a circle.

Since the inequalities \eqref{gjgc} and \eqref{gjgc-1} are strict unless the support function of evolving curve
is of form $p(\theta,t)=\frac{L}{2\pi}+a_1(t)\cos\theta+b_1(t)\sin\theta+a_2(t)\cos 2\theta+b_2(t)\sin2\theta$,
the quantity $V$ is strict decreasing for the case $\xi=24$. This leads to
$V(t)\ge V(\infty)=0$, and thus, \eqref{in-j1}
holds and with equality if and only if the support function of evolving curve
is of form $p(\theta,t)=\frac{L}{2\pi}+a_1(t)\cos\theta+b_1(t)\sin\theta+a_2(t)\cos 2\theta+b_2(t)\sin2\theta$.    
\end{proof}

As a natural corollary of inequality \eqref{in-j1},
we have
\begin{corollary}\label{cor5.3}
Let  $\gamma$ be an $\ell$-convex curve with algebraic length $L=0$ and algebraic area $A$,
then
\begin{equation}\label{eqn5.3cr}
    \int_0^{2\pi }\beta_\theta^2d\theta +\xi A\ge 0
\end{equation}
holds for any $\xi\le 24$, 
and the equality holds if $\gamma$ is a point.
Moreover, for any $\xi<24$, if the equality in \eqref{eqn5.3cr} holds, then $\gamma$ is a point,
and for $\xi=24$, if the equality  holds,
then the support function of $\gamma$ is of the form $p(\theta)=a_1\cos\theta+b_1\sin\theta+a_2\cos 2\theta+b_2\sin2\theta$.
\end{corollary}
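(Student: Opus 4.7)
The plan is to deduce Corollary \ref{cor5.3} as an immediate specialization of Theorem \ref{thm-j1} to the hypothesis $L=0$, without re-running the flow argument. Set $L=0$ in inequality \eqref{in-j1}; the right-hand side becomes $\xi\bigl(0-A\bigr) = -\xi A$, and rearranging yields exactly $\int_0^{2\pi}\beta_\theta^2\,d\theta + \xi A \ge 0$ for every $\xi \le 24$. This gives the main inequality with no further work.

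For the equality discussion I would split into the two cases already separated in Theorem \ref{thm-j1}. When $\xi < 24$, equality in \eqref{in-j1} forces $\gamma$ to be a circle; but a circle whose algebraic length $L$ vanishes must, by the formula $L = 2\pi a_0$ in \eqref{L1}, have radius $L/(2\pi)=0$, so $\gamma$ degenerates to a single point, namely the Steiner point $(a_1,b_1)$. Conversely, if $\gamma$ is a point, both sides vanish trivially, which establishes both directions of the equality characterization. When $\xi = 24$, equality in \eqref{in-j1} forces the support function to be of the form $p(\theta)=a_0+a_1\cos\theta+b_1\sin\theta+a_2\cos 2\theta+b_2\sin 2\theta$; imposing $L=0$ and again using $L=2\pi a_0$ gives $a_0=0$, so $p(\theta)$ reduces to $a_1\cos\theta+b_1\sin\theta+a_2\cos 2\theta+b_2\sin 2\theta$, exactly as claimed.

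There is no genuine obstacle here, since all the analytic content, notably the Wirtinger-type bounds \eqref{gjgc} and \eqref{gjgc-1}, the monotonicity of $V$ under the length-preserving flow \eqref{jh1}, and the convergence to the limiting circle have already been invested in the proof of Theorem \ref{thm-j1}. The only small point to be careful about is the boundary case $L=0$: the flow \eqref{jh1} remains well-defined here (unlike the area-preserving flow, whose nonlocal term $\lambda(t)=\frac{1}{L}\int_0^{2\pi}\beta^2\,d\theta$ would be singular), and by Theorem \ref{thm1.2} the evolving curve collapses to a point, so the limiting value $V(\infty)=0$ used in Theorem \ref{thm-j1} is still correct when $L_0 = 0$. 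Once this is noted, the corollary follows by direct substitution and the equality discussion amounts to reading off what the equality statements of Theorem \ref{thm-j1} say under the extra constraint $a_0=0$.
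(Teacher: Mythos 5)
Your proof is correct and follows exactly the route the paper intends: the corollary is stated there as an immediate specialization of Theorem \ref{thm-j1} to $L=0$, with the equality cases read off from $L=2\pi a_0=0$. Your additional remark that the length-preserving flow (and hence the limit $V(\infty)=0$) remains valid when $L_0=0$ is a sensible check, though Theorem \ref{thm-j1} as stated already imposes no restriction on $L$.
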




{\bf\noindent Data availability} \quad Data sharing not applicable to this article as no datasets were generated or analysed during
the current study.


{\bf\noindent Conflict of interest} \quad On behalf of all authors, the corresponding author states that there is no conflict of interest.


\end{document}